\newtheorem{theorem}{Theorem}[section]
\newtheorem{lemma}[theorem]{Lemma}
\newtheorem{corollary}[theorem]{Corollary}
\theoremstyle{definition}
\theoremstyle{remark}
\newtheorem{remark}[theorem]{Remark}
\numberwithin{equation}{section}
\begin{document}
\setcounter{page}{1}

\title[ Bj\"ork-Sj\"olin condition on graded Lie groups ]{Bj\"ork-Sj\"olin condition for strongly singular convolution operators on graded Lie groups}

\author[D. Cardona]{Duv\'an Cardona}
\address{
  Duv\'an Cardona:
  \endgraf
  Department of Mathematics: Analysis, Logic and Discrete Mathematics
  \endgraf
  Ghent University, Belgium
  \endgraf
  {\it E-mail address} {\rm duvanc306@gmail.com, duvan.cardonasanchez@ugent.be}
  }
  
\author[M. Ruzhansky]{Michael Ruzhansky}
\address{
  Michael Ruzhansky:
  \endgraf
  Department of Mathematics: Analysis, Logic and Discrete Mathematics
  \endgraf
  Ghent University, Belgium
  \endgraf
 and
  \endgraf
  School of Mathematical Sciences
  \endgraf
  Queen Mary University of London
  \endgraf
  United Kingdom
  \endgraf
  {\it E-mail address} {\rm michael.ruzhansky@ugent.be, m.ruzhansky@qmul.ac.uk}
  }

\thanks{The authors are supported  by the FWO  Odysseus  1  grant  G.0H94.18N:  Analysis  and  Partial Differential Equations and by the Methusalem programme of the Ghent University Special Research Fund (BOF)
(Grant number 01M01021). Michael Ruzhansky is also supported  by EPSRC grant 
EP/R003025/2.
}

     \keywords{Calder\'on-Zygmund operator, Weak (1,1) inequality, Oscillating singular integrals}
     \subjclass[2010]{35S30, 42B20; Secondary 42B37, 42B35}

\begin{abstract} In this work we extend the $L^1$-Bj\"ork-Sj\"olin theory of strongly  singular  convolution operators to arbitrary graded Lie groups.  Our criteria are presented in terms of the oscillating H\"ormander condition due to Bj\"ork and Sj\"olin   of the kernel of the operator, and the decay of its group Fourier transform is measured in terms of the infinitesimal representation of an arbitrary Rockland operator. The historical result by Bj\"ork and Sj\"olin is re-obtained in the case of the Euclidean space.
\end{abstract} 

\maketitle

\tableofcontents
\allowdisplaybreaks

\section{Introduction}

The aim of this manuscript is to  extend the theory of strongly singular integrals by Bj\"ork and Sj\"olin \cite{Bjork,Sjolin} to arbitrary graded Lie groups. This family of Lie groups includes Heisenberg type groups,  stratified groups, and are characterised between the family of nilpotent Lie groups by the existence of (Rockland operators) hypoelliptic left-invariant homogeneous partial differential operators in view of the Helffer and Nourrigat solution of the Rockland conjecture \cite{HelfferNourrigat}. 

Oscillating singular integrals arise as generalisations of the oscillating Fourier multipliers. In the euclidean setting they are used in PDE to estimate in the family of Sobolev spaces the hyperbolic  differential problems associated to the powers of elliptic operators, in particular of the fractional (positive) Laplacian $\Delta_x^{\frac{\gamma}{2}},$ where $0<\gamma<1.$   

In the Euclidean setting, oscillating Fourier multipliers are associated to symbols of the form
\begin{equation}\label{OSC:F:M}
    \widehat{K}(\xi)=\psi(\xi)\frac{e^{i|\xi|^a}}{|\xi|^{\frac{n\alpha}{2}}},\,\psi\in C^{\infty}(\mathbb{R}^n),\quad 0<a<1,
\end{equation} where $\psi$ vanishes near the origin and is equal to one for  $|\xi|$ large. It was proved by Wainger \cite{Wainger1965} that $K(x)$ is essentially equal to $c_n|x|^{-n-\lambda}e^{ic_{n}'|x|^{a'}},$ where
$  \lambda=\frac{n(a-\alpha)}{2(1-a)},$ and $ a'=\frac{a}{a-1}.$ From this one can deduce that $$|\nabla K(x)|\lesssim |x|^{-n-\lambda-1+a'}.$$
This gradient estimate shows that a kernel that satisfies \eqref{OSC:F:M} is outside of the theory of singular integrals due to Calder\'on and Zygmund  \cite{CalderonZygmund1952}. Nevertheless, the boundedness of singular integrals defined by kernels as in \eqref{OSC:F:M} was extensively investigated in the classical works of   Hardy \cite{Hardy1913}, 
Hirschman \cite{Hirschman1956} and Wainger  \cite{Wainger1965} until the end-points estimates proved by \cite{Fefferman1970,FeffermanStein1972}.  Further works on the subject in the setting of smooth manifolds  and beyond can be found in Seeger \cite{Seeger,Seeger1990,Seeger1991}, Seeger and Sogge \cite{SeegerSogge} and for the setting of Fourier integral operators, we refer the reader to Seeger, Sogge and Stein \cite{SSS} and Tao \cite{Tao}. 

In  \cite{Fefferman1970,FeffermanStein1972} Fefferman and Stein introduced a theory for oscillating Fourier multipliers which are convolution operators with singular kernels satisfying the condition 
\begin{equation}\label{FeffCond}
   \textnormal{A}(\theta):\,  \sup_{0<R<1}\Vert\, \smallint\limits_{|x|\geq 2R^{1-\theta}}|K(x-y)-K(x)|dx \Vert_{L^\infty(B(0,R),\,dy)}  <\infty,
\end{equation} for some $0\leq \theta<1,$ and  its Fourier transform has order $-n\theta/2,$ that is
\begin{equation}\label{decay}
  \textnormal{B}(\theta):\,  |\widehat{K}(\xi)|=O((1+|\xi|)^{-\frac{n\theta}{2}}),\quad 0\leq \theta<1.
\end{equation}
With $\theta=0,$ Fefferman-Stein's conditions agree with the one introduced  by H\"ormander \cite{Hormander1960} for the standard Calder\'on-Zygmund operators \cite{CalderonZygmund1952}. However, with $0<\theta<1,$ the conditions above by Fefferman and Stein also consider the oscillating kernels as in \eqref{OSC:F:M}.

The boundedness theory due to Fefferman and Stein can be summarised (by several reasons, including the real and complex interpolation theory of continuous linear operators on Lebesgue spaces) in the following theorem.
\begin{theorem}[Fefferman and Stein \cite{Fefferman1970,FeffermanStein1972}, 1970-1972]\label{Fefferman:Stein}
Assume that $K\in L^1_{loc}(\mathbb{R}^n\setminus\{0\})$ is a distribution with compact support satisfying the hypothesis $\textnormal{A}(\theta)$ and $\textnormal{B}(\theta)$ with $0\leq \theta<1.$  Then the convolution operator 
$$  T:f\mapsto f\ast K, $$ admits an extension of weak (1,1) type. Moreover, $T$ admits a bounded extension from the Hardy space $H^1(\mathbb{R}^n)$ into $L^1(\mathbb{R}^n).$
\end{theorem}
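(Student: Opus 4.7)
The plan is to follow the Calderón-Zygmund paradigm, with the hypotheses $\textnormal{A}(\theta)$ and $\textnormal{B}(\theta)$ playing complementary roles and their precise balance at $\theta>0$ forming the main analytical difficulty. First I would observe that $L^2$-boundedness of $T$ is immediate from $\textnormal{B}(\theta)$: the estimate $|\widehat K(\xi)|\lesssim(1+|\xi|)^{-n\theta/2}$ places $\widehat K$ in $L^\infty(\mathbb{R}^n)$, so Plancherel gives $\|T\|_{L^2\to L^2}\lesssim\|\widehat K\|_\infty<\infty$. This $L^2$ estimate will be the main quantitative input throughout.

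For the weak $(1,1)$ bound, fix $\alpha>0$ and perform the standard Calderón-Zygmund decomposition $f=g+\sum_j b_j$ at height $\alpha$, with cubes $Q_j$ of sidelength $\ell_j$, $b_j$ mean-zero and supported in $Q_j$, $\|g\|_\infty\lesssim\alpha$, $\|b_j\|_1\lesssim\alpha|Q_j|$, and $\sum_j|Q_j|\lesssim\alpha^{-1}\|f\|_1$. The good-part bound $|\{|Tg|>\alpha\}|\lesssim\alpha^{-1}\|f\|_1$ follows from Chebyshev and the $L^2$ step. For the bad part I enlarge each $Q_j$ to a concentric cube $\tilde Q_j$ of side $c\max(\ell_j,\ell_j^{1-\theta})$; the exponent $1-\theta$ is precisely the one dictated by $\textnormal{A}(\theta)$ with $R=\ell_j$. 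A Fubini argument using the mean-zero property of $b_j$ then yields from $\textnormal{A}(\theta)$
\[
\int_{(\tilde Q_j)^c}|Tb_j(x)|\,dx\;\lesssim\;\|b_j\|_1,
\]
so that $|\{|Tb|>\alpha/2\}\setminus\bigcup_j\tilde Q_j|\lesssim\alpha^{-1}\|f\|_1$. The main obstacle appears here: for $\theta>0$ one has $|\bigcup_j\tilde Q_j|\sim\sum_j\ell_j^{n(1-\theta)}$, which can be arbitrarily larger than the natural Calderón-Zygmund bound $\sum_j|Q_j|\lesssim\alpha^{-1}\|f\|_1$, so one cannot simply discard $\bigcup_j\tilde Q_j$ as an exceptional set. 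To remove this obstruction, rather than controlling $|\bigcup_j\tilde Q_j|$ itself I would bound $|\{|Tb|>\alpha/2\}\cap\bigcup_j\tilde Q_j|$ through a dyadic frequency decomposition $K=\sum_{k\geq 0}K_k$, with $\widehat{K_k}$ supported on $|\xi|\sim 2^k$. The decay $\|\widehat{K_k}\|_\infty\lesssim 2^{-kn\theta/2}$ from $\textnormal{B}(\theta)$, combined with the Calderón-Zygmund data and the matching $2^k\ell_j\sim 1$ between frequency scale and cube scale, yields a frequency-indexed $L^2$-type estimate that sums in $k$ through the exponential gain, closing the argument. This interplay between the cancellation encoded in $\textnormal{A}(\theta)$ and the smoothing encoded in $\textnormal{B}(\theta)$ is the crucial analytical point of the whole theorem.

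For the $H^1\to L^1$ statement, I would invoke the atomic characterization of $H^1(\mathbb{R}^n)$: it suffices to prove $\|Ta\|_{L^1(\mathbb{R}^n)}\leq C$ uniformly over standard $H^1$-atoms $a$ supported in cubes $Q$ with $\|a\|_\infty\leq|Q|^{-1}$ and $\int a=0$. Splitting $\mathbb{R}^n=\tilde Q\cup(\tilde Q)^c$ with $\tilde Q$ the same enlargement as before, the integral $\int_{(\tilde Q)^c}|Ta|\,dx$ is handled by $\textnormal{A}(\theta)$ together with the atom's mean-zero property exactly as for $b_j$, while $\int_{\tilde Q}|Ta|\,dx$ is estimated via Cauchy-Schwarz and the $L^2$-bound, with the factor $|\tilde Q|^{1/2}\|a\|_2\lesssim|\tilde Q|^{1/2}|Q|^{-1/2}$ absorbed by the same frequency-based refinement using $\textnormal{B}(\theta)$ that balances the weak $(1,1)$ argument. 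The final weak $(1,1)$ and $H^1\to L^1$ bounds, together with the $L^2$ bound, then deliver the full set of conclusions stated in the theorem.
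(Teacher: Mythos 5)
First, a point of orientation: the paper does not prove this statement. Theorem \ref{Fefferman:Stein} is quoted as background and attributed to Fefferman \cite{Fefferman1970} and Fefferman--Stein \cite{FeffermanStein1972}; its group analogue, Theorem \ref{CR:0CZ:Graded:2022}, is likewise imported from \cite{CR20221,CR20223}. So your proposal can only be measured against the classical proof, not against anything in this manuscript.

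Your skeleton is the right one ($L^2$ via Plancherel and $\textnormal{B}(\theta)$, Calder\'on--Zygmund decomposition, $\textnormal{A}(\theta)$ with mean--zero for the far field, enlarged cubes of side $\ell_j^{1-\theta}$), and you correctly isolate the central obstruction: $\sum_j|\tilde Q_j|\sim\sum_j\ell_j^{n(1-\theta)}$ is not controlled by $\alpha^{-1}\Vert f\Vert_{L^1}$. But your proposed resolution --- a dyadic frequency decomposition $K=\sum_k K_k$ closed by ``the matching $2^k\ell_j\sim 1$'' --- is a gesture rather than an argument, and as stated it cannot be closed. Two concrete problems. (i) The hypothesis $\textnormal{A}(\theta)$ is a statement about $K$ itself and does not survive frequency localization, so once you replace $K$ by $K_k$ you have forfeited the only tool that controls the far field; you cannot use $\textnormal{A}(\theta)$ and $\textnormal{B}(\theta)$ simultaneously on the pieces $K_k$. (ii) The Calder\'on--Zygmund decomposition gives you $\Vert b_j\Vert_{L^1}\lesssim\alpha|Q_j|$ but no bound on $\Vert b_j\Vert_{L^2}$, so the estimate $\Vert K_k\ast b_j\Vert_{L^2}\leq\Vert\widehat{K_k}\Vert_{\infty}\Vert b_j\Vert_{L^2}$ is unavailable; the alternative $\Vert K_k\Vert_{L^2}\Vert b_j\Vert_{L^1}\lesssim 2^{kn(1-\theta)/2}\alpha|Q_j|$ sums acceptably only over $2^k\lesssim\ell_j^{-1}$, and in the complementary high--frequency regime $2^k\gtrsim\ell_j^{-1}$ neither the cancellation of $b_j$ nor the decay of $\widehat{K_k}$ produces a summable bound. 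The missing idea in the known proofs is to decompose the \emph{bad part} rather than the kernel: one factors $K=K^{(0)}\ast k_\theta$ with $\Vert\widehat{K^{(0)}}\Vert_\infty<\infty$ and $k_\theta$ the integrable Bessel kernel of order $-n\theta/2$, and writes $b_j=b_j\ast\phi_j+(b_j-b_j\ast\phi_j)$ with $\phi_j$ an approximate identity at a $\theta$--dependent scale; the mollified piece convolved with $k_\theta$ acquires controlled $L^\infty$ and $L^1$ (hence $L^2$) norms and is fed to the $L^2$--bounded operator $K^{(0)}$, while the remainder keeps its cancellation at a scale for which $\textnormal{A}(\theta)$ yields an exceptional set of measure $\lesssim|Q_j|$. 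Without this extra splitting your weak $(1,1)$ argument has a genuine gap. By contrast, your $H^1\to L^1$ step is essentially sound: for an atom $a$ on a cube of side $\ell$, Cauchy--Schwarz over $\tilde Q$ together with
\begin{equation*}
\smallint_{\mathbb{R}^n}(1+|\xi|)^{-n\theta}|\widehat a(\xi)|^2\,d\xi\;\lesssim\;\ell^{-n(1-\theta)}=|\tilde Q|^{-1},
\end{equation*}
obtained from the moment condition at frequencies $|\xi|\leq\ell^{-1}$ and from $\Vert a\Vert_{L^2}\leq|Q|^{-1/2}$ above, gives the uniform atom bound, and the far field is handled by $\textnormal{A}(\theta)$ as you say.
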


On the other hand, answering a question by Bj\"ork in \cite{Bjork}, Sj\"olin  \cite{Sjolin} developed the $L^1$-theory for the convolution operators $T:f\mapsto f\ast K$ where the kernel $K$ satisfies the two conditions $\textnormal{A}(\alpha)$ and $\textnormal{B}(\theta)$ given by
\begin{equation}\label{Sjolin:1}
   \textnormal{A}(\theta):\,  \sup_{0<R<b}\Vert\, \smallint\limits_{|x|\geq 2R^{1-\theta}}|K(x-y)-K(x)|dx \Vert_{L^\infty(B(0,R),\,dy)}  <\infty,
\end{equation} where $0<b< 1,$ and 
\begin{equation}\label{Sjolin:2}
  \textnormal{B}(\alpha):\,  |\widehat{K}(\xi)|=O((1+|\xi|)^{-\frac{n\alpha}{2}}),\quad 0\leq \alpha<1,
\end{equation}
where $0<\alpha<\theta<1.$  In the standard terminology of harmonic analysis, a convolution operator with  kernel satisfying the conditions $ \textnormal{A}(\theta)$ and $\textnormal{B}(\alpha)$ with  $0<\alpha<\theta<1$ is  called a strongly singular integral. The result in Sj\"olin  \cite{Sjolin} states the boundedness of this family of operators in $L^1(\mathbb{R}^n)$ as follows. Here, $\Delta_x=-\sum_{j=1}^n\partial_{x_j}^2$ is the positive Laplacian on $\mathbb{R}^n,$ and for any $s\in \mathbb{R},$ $L^1_{s}(\mathbb{R}^n)$ is the Sobolev space obtained from the closure of $C^{\infty}_0(\mathbb{R}^n)$ by the norm $\Vert f\Vert_{L^1_s}:=\Vert(1+\Delta_x)^{\frac{s}{2}}f\Vert_{L^1}.$ 

\begin{theorem}[Sj\"olin \cite{Sjolin}, 1976]
Assume that $K\in L^1_{loc}(\mathbb{R}^n\setminus\{0\})$ is a distribution with compact support satisfying the hypothesis $\textnormal{A}(\theta)$ and $\textnormal{B}(\alpha)$ with $0<\alpha< \theta<1.$ Then, $T:H^1(\mathbb{R}^n)\rightarrow L^1_{-\varkappa}(\mathbb{R}^n)$ extends to a bounded operator provided that
\begin{equation}
    \varkappa\geq {n(\theta-\alpha)}/{[n(1-\theta)+2]},
\end{equation} or equivalently, $$(1+\Delta_x)^{-\frac{\varkappa}{2}}T:H^{1}(\mathbb{R}^n)\rightarrow L^1(\mathbb{R}^n)$$ admits a bounded extension.
\end{theorem}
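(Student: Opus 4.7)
The approach is to reformulate the claim as: $S := (1+\Delta_x)^{-\varkappa/2} T$ extends to a bounded operator from $H^1(\mathbb{R}^n)$ to $L^1(\mathbb{R}^n)$, and then apply the atomic decomposition of the Hardy space. It then suffices to bound $\|Sa\|_{L^1(\mathbb{R}^n)}$ uniformly over $H^1$-atoms $a$ supported in a ball $B = B(0,r)$ with $\|a\|_{L^\infty} \leq r^{-n}$ and $\int a\,dx = 0$ (using translation invariance to center at the origin). The kernel of $S$ is $k := G_\varkappa * K$, where $G_\varkappa$ denotes the Bessel kernel with $\widehat{G_\varkappa}(\xi) = (1+|\xi|^2)^{-\varkappa/2}$, and combining with \eqref{Sjolin:2} yields the sharpened Fourier decay $|\hat{k}(\xi)| \lesssim (1+|\xi|)^{-n\alpha/2 - \varkappa}$.

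Imitating Sj\"olin's adaptation of the Calder\'on-Zygmund decomposition to the oscillating setting, I would split $\|Sa\|_{L^1} = I + II$ according to the dilated ball $B^* := B(0, 2r^{1-\theta})$; the dilation exponent $1-\theta$ is dictated by the shape of \eqref{Sjolin:1}. The exterior piece $II$ is treated via the vanishing moment $\int a = 0$, producing kernel differences $k(x-y) - k(x)$ to which the $\textnormal{A}(\theta)$ condition is applied after transferring it from $K$ to $k = G_\varkappa * K$. The interior piece $I$ is handled by Cauchy-Schwarz followed by Plancherel, exploiting the estimate $|\hat{a}(\xi)| \lesssim \min(1, r|\xi|)$ (the low-frequency bound coming from the vanishing moment and a first-order Taylor expansion of the exponential) and splitting the resulting frequency integral at the threshold $|\xi| \sim 1/r$.

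The critical value $\varkappa \geq n(\theta-\alpha)/[n(1-\theta)+2]$ emerges from balancing the volume loss $|B^*|^{1/2} \sim r^{n(1-\theta)/2}$ in the inner estimate against the combined gain from the Bessel smoothing (of strength $2\varkappa$) and the quadratic low-frequency cancellation (of strength $2$), which together account for the denominator $n(1-\theta)+2$. The chief obstacle is the rigorous transfer of the $\textnormal{A}(\theta)$ condition from $K$ to the smoothed kernel $k$: since $G_\varkappa$ exhibits a local singularity of order $|x|^{\varkappa-n}$ near the origin for $\varkappa < n$, one decomposes $G_\varkappa = \chi G_\varkappa + (1-\chi)G_\varkappa$ using a smooth cutoff $\chi$ near zero. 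The Schwartz tail $(1-\chi)G_\varkappa$ contributes a harmless smooth convolution, while the singular $L^1$-piece $\chi G_\varkappa$ commutes with the $\textnormal{A}(\theta)$ seminorm of $K$ up to uniformly controlled errors via Fubini's theorem. A secondary technical point is ensuring that the low-frequency Taylor expansion bound on $\hat{a}$ and the high-frequency $L^1$-bound are combined with the correct split at $|\xi| \sim 1/r$ to match the Bessel decay exponent $n\alpha/2 + \varkappa$; this is a routine but delicate optimization.
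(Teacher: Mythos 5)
Your overall strategy (atomic decomposition, a split at a dilated ball, Cauchy--Schwarz plus Plancherel inside, the moment condition plus a H\"ormander-type condition outside) is the right family of ideas, but the numerology as you have set it up does not close, and the step you dismiss as routine is in fact the heart of the proof. Concretely: with $k=G_\varkappa\ast K$ one has $|\widehat{k}(\xi)|\lesssim(1+|\xi|)^{-n\alpha/2-\varkappa}$, so the standard computation with $|\widehat a(\xi)|\lesssim\min(1,r|\xi|)$ and a frequency split at $|\xi|\sim 1/r$ gives $\|Sa\|_{L^2}^2\lesssim r^{-n+n\alpha+2\varkappa}$, and Cauchy--Schwarz over your ball $B^*=B(0,2r^{1-\theta})$ yields an interior contribution of size $r^{(n\alpha+2\varkappa-n\theta)/2}$. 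This is uniformly bounded in $r$ only when $\varkappa\ge n(\theta-\alpha)/2$, which is strictly weaker than the claimed threshold $n(\theta-\alpha)/[n(1-\theta)+2]$, since the denominator there exceeds $2$. The ``quadratic low-frequency cancellation of strength $2$'' is already consumed in producing the factor $r^{-n+n\alpha+2\varkappa}$ and cannot be counted a second time to manufacture the denominator $n(1-\theta)+2$.

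To reach the critical exponent one is forced to take the exterior ball of radius comparable to $r^{1-a}$ with the intermediate exponent $a=\frac{n\alpha(1-\theta)+2\theta}{n(1-\theta)+2}\in(\alpha,\theta)$, chosen exactly so that $\tfrac{n\alpha}{2}+\varkappa=\tfrac{na}{2}$; then the interior piece balances to $O(1)$. But since $a<\theta$ the exterior region $\{|x|\ge 2r^{1-a}\}$ is \emph{larger} than $\{|x|\ge 2r^{1-\theta}\}$, so the exterior piece now requires the H\"ormander condition $\mathrm{A}(a)$ for the smoothed kernel $k$, which is strictly stronger than the condition $\mathrm{A}(\theta)$ you propose to transfer from $K$; a Fubini-type transfer through $G_\varkappa$ can only yield $\mathrm{A}(\theta)$ for $k$, not $\mathrm{A}(a)$. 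Establishing $\mathrm{A}(a)$ for $G_\varkappa\ast K$ is precisely the bulk of the paper's argument (carried out in the graded-group setting and specialised to $\mathbb{R}^n$ to recover Sj\"olin's theorem): one decomposes $G_\varkappa$ dyadically, and for each dyadic piece interleaves the kernel condition $\mathrm{A}(\theta)$ of $K$ (applied along a chain of intermediate translation points joining $e$ to $y$) with the Fourier decay $\mathrm{B}(\alpha)$ of $K$ (via Cauchy--Schwarz and Plancherel on the small balls $|x|\le\delta_k$), splitting into three regimes according to the size of the dyadic scale relative to $|y|$. Once $\mathrm{A}(a)$ and the matching Fourier condition of order $na/2$ are in place for $k$, the conclusion follows from the Fefferman--Stein-type theorem with parameter $a$ (Theorem \ref{CR:0CZ:Graded:2022}). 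This reduction to the intermediate exponent $a$, and the three-case analysis needed to justify it, are the missing ideas in your proposal.
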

In the recent works \cite{CR20221,CR20223} the authors have generalised on graded Lie groups (with the Fourier transform criteria in terms of Rockland operators) the theory established by Fefferman and Stein in \cite{Fefferman1970,FeffermanStein1972}. 
The following extension of Theorem  \ref{Fefferman:Stein} has been obtained as part of the investigation done in \cite{CR20221,CR20223}.

\begin{theorem}[\cite{CR20221,CR20223}]\label{CR:0CZ:Graded:2022}
 Consider $G$ to be a  graded Lie group,  let $|\cdot|$ be a homogeneous quasi-norm on $G$ and let $Q$ be  its homogeneous dimension. Let $\mathcal{R}$ be a Rockland operator of homogeneous degree $\nu>0.$ Assume that the kernel $K$ of the convolution operator $T:f\mapsto f\ast K,$  satisfies the estimate
     \begin{equation}\label{Fourier:growth:1New}
        \sup_{\pi\in \widehat{G}}\Vert \widehat{K}(\pi) (1+\pi(\mathcal{R}))^{\frac{Q\theta}{2\nu}} \Vert_{\textnormal{op}}<\infty,
    \end{equation} 
    and the kernel condition   
    \begin{equation}\label{GS:CZ:cond:22}
        [K]_{H_{\infty,\theta}}':=\sup_{0<R<1}\sup_{|y|<R}  \smallint\limits_{|x|\geq 2R^{1-\theta}}|K(y^{-1}x)-K(x)|dx  <\infty.
    \end{equation} Then $T:H^1(G)\rightarrow L^1(G)$ extends to a bounded operator from the Hardy space $H^1(G)$ into $L^1(G)$. Moreover, $T:L^1(G)\rightarrow L^{1,\infty}(G)$ admits an extension of weak (1,1) type.
\end{theorem}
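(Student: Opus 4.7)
The plan is to follow the graded-group adaptation of the classical Fefferman--Stein scheme, replacing the Laplacian by the Rockland operator $\mathcal{R}$ and the Euclidean Calder\'on--Zygmund covering by one with respect to a left-invariant homogeneous quasi-metric on $G$. The first observation is that \eqref{Fourier:growth:1New} with $\theta\geq 0$ implies $\sup_{\pi\in\widehat{G}}\|\widehat{K}(\pi)\|_{\mathrm{op}}<\infty$, so the group Plancherel theorem on $\widehat{G}$ immediately gives $T\in\mathcal{B}(L^2(G))$. This $L^2$-bound will serve as the universal substitute for kernel information at scales not controlled by \eqref{GS:CZ:cond:22}.

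For the weak-type (1,1) estimate, at a fixed level $\lambda>0$ I would apply the Calder\'on--Zygmund decomposition $f=g+\sum_j b_j$ with $b_j$ supported on disjoint balls $B_j=B(x_j,R_j)$ satisfying $\int b_j=0$, $\|g\|_{L^\infty}\lesssim\lambda$ and $\sum_j |B_j|\lesssim \lambda^{-1}\|f\|_{L^1}$. The good part is standard: $|\{|Tg|>\lambda/2\}|\lesssim \lambda^{-2}\|g\|_{L^2}^2\lesssim \lambda^{-1}\|f\|_{L^1}$. For the bad part, split the indices into $\mathcal{I}_{\mathrm{loc}}=\{j:R_j<1\}$ and $\mathcal{I}_{\mathrm{gl}}=\{j:R_j\geq 1\}$; for each $j\in\mathcal{I}_{\mathrm{loc}}$ set the dilated ball $B_j^{*}=B(x_j,2R_j^{1-\theta})$. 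Using the cancellation of $b_j$ and the left-invariance of Haar measure,
\[
\int_{G\setminus B_j^{*}}|Tb_j(x)|\,dx\leq \int_{B_j}|b_j(y)|\int_{|x_j^{-1}x|\geq 2R_j^{1-\theta}}|K(y^{-1}x)-K(x_j^{-1}x)|\,dx\,dy \lesssim \|b_j\|_{L^1},
\]
by \eqref{GS:CZ:cond:22}. The global-scale balls $j\in\mathcal{I}_{\mathrm{gl}}$ are absorbed into an $L^2$-argument since their total number is bounded by $\lambda^{-1}\|f\|_{L^1}$. For the $H^1(G)\to L^1(G)$ half I would invoke the atomic characterisation of $H^1(G)$ on graded Lie groups and verify $\|Ta\|_{L^1}\lesssim 1$ uniformly in every $(1,\infty)$-atom $a$: atoms of radius $R\geq 1$ are treated by $L^2$--Cauchy--Schwarz, atoms of radius $R<1$ by splitting $G=B^{*}\cup (B^{*})^c$ and using the displayed off-diagonal bound together with Cauchy--Schwarz on the dilated ball $B^{*}$.

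The main obstacle is controlling the measure of the enlarged exceptional set $E:=\bigcup_{j\in\mathcal{I}_{\mathrm{loc}}} B_j^{*}$: because $|B_j^{*}|\asymp R_j^{Q(1-\theta)}$ is much larger than $|B_j|\asymp R_j^Q$ when $R_j<1$, the naive bound $|E|\leq \sum|B_j^{*}|$ does not close the Calder\'on--Zygmund loop. To overcome this I would introduce a Littlewood--Paley decomposition of the kernel adapted to $\mathcal{R}$, writing $K=\sum_{k\geq 0}K_k$ with $\widehat{K_k}$ spectrally localised to $\pi(\mathcal{R})\sim 2^{k\nu}$; by heat-kernel and functional-calculus estimates on $G$ this translates into a spatial localisation of $K_k$ at scale $2^{-k}$, while \eqref{Fourier:growth:1New} yields the dyadic gain $\|K_k\|_{L^2\to L^2}\lesssim 2^{-kQ\theta/2}$. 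Matching the Rockland spectral scale $2^{-k}$ with the spatial scale $R^{1-\theta}$ is the technical crux: it is exactly what ties the decay exponent $Q\theta/(2\nu)$ in \eqref{Fourier:growth:1New} to the dilation exponent $1-\theta$ in \eqref{GS:CZ:cond:22}, and it allows the measure of the bad set to be absorbed into $\lambda^{-1}\|f\|_{L^1}$ via an $L^2\to L^{1,\infty}$ interpolation on each dyadic piece.
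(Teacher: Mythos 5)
First, a remark on scope: the paper does not prove Theorem \ref{CR:0CZ:Graded:2022} at all --- it is imported from \cite{CR20221,CR20223} and used as a black box in the proof of Theorem \ref{Sjolin:Th:graded:groups} --- so your proposal can only be judged on its own terms. Your set-up is sound (the $L^2$-bound from \eqref{Fourier:growth:1New}, the Calder\'on--Zygmund decomposition, the off-diagonal estimate for a single mean-zero $b_j$ via \eqref{GS:CZ:cond:22}), and you correctly locate the crux: $\sum_j|B_j^{\ast}|\sim\sum_j R_j^{Q(1-\theta)}$ is not controlled by $\lambda^{-1}\Vert f\Vert_{L^1}$. But the step you offer to resolve it is a genuine gap, not a proof. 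A Littlewood--Paley decomposition $K=\sum_k K_k$ in the spectrum of $\mathcal{R}$ is a global, cube-independent object; you never say how the mismatched pieces $K_k\ast b_j$ (with $2^{-k}$ far from the scale of $B_j$) are handled, nor why an ``$L^2\to L^{1,\infty}$ interpolation on each dyadic piece'' yields constants summable in $k$ --- summing weak-type bounds over a dyadic family diverges unless each piece carries a geometric gain, and none is exhibited. The mechanism that actually closes this loop (Fefferman's, and the one behind \cite{CR20221,CR20223}) is per-cube rather than per-frequency: one mollifies each $b_j$ at the cube-dependent scale $\rho_j=R_j^{1/(1-\theta)}$, splits $K\ast b_j=K\ast(b_j-\phi_{\rho_j}\ast b_j)+K\ast(\phi_{\rho_j}\ast b_j)$, controls the first term by \eqref{GS:CZ:cond:22} outside a \emph{constant} dilate of $B_j$ (so the exceptional set has measure $\lesssim\sum_j|B_j|$), and estimates the smoothed sum in $L^2$ using \eqref{Fourier:growth:1New} together with the frequency localisation of $\phi_{\rho_j}$. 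Without this pairing of each cube with its own scale, the argument does not close.

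A secondary gap sits in the $H^1\to L^1$ half: ``Cauchy--Schwarz on the dilated ball $B^{\ast}$'' using only the operator norm of $T$ on $L^2$ gives $\Vert Ta\Vert_{L^1(B^{\ast})}\lesssim |B^{\ast}|^{1/2}\Vert a\Vert_{L^2}\sim R^{-Q\theta/2}$, which blows up as $R\to 0$. A uniform bound requires the full strength of \eqref{Fourier:growth:1New}: one must write $\Vert\widehat{K}\widehat{a}\Vert_{L^2(\widehat{G})}\lesssim\Vert(1+\mathcal{R})^{-\frac{Q\theta}{2\nu}}a\Vert_{L^2(G)}$ and then prove the smoothing gain $\Vert(1+\mathcal{R})^{-\frac{Q\theta}{2\nu}}a\Vert_{L^2(G)}\lesssim R^{Q\theta/2}\Vert a\Vert_{L^2(G)}$ for mean-zero atoms at scale $R<1$, an estimate which on a graded group rests on kernel bounds for the Bessel potential of $\mathcal{R}$ and is exactly where the exponent $Q\theta/2$ is consumed; it is absent from your sketch.
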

In this work we are going to  extend in our main Theorem \ref{Sjolin:Th:graded:groups} the conditions  $\textnormal{A}(\theta)$ and $\textnormal{B}(\alpha)$ of \eqref{Sjolin:1} and \eqref{Sjolin:2} due to   Bj\"ork and Sj\"olin to arbitrary graded Lie groups. To present the statement of the theorem we introduce some notations.

Here, for any graded Lie group $G,$ and $s\in \mathbb{R},$ $L^1_{s}(G)$ is the closure of $C^{\infty}_0(G)$ by the norm $$\Vert f\Vert_{L^1_s(G)}:=\Vert(1+\mathcal{R})^{\frac{s}{\nu}}f\Vert_{L^1},$$ where $\mathcal{R}$ is a positive Rockland operator of homogeneous degree $v>0,$ and  $L^{1,\infty}_{s}(G)$ is the weak-$L^{1}_{s}(G)$ Sobolev space defined by the semi-norm
$$ \Vert f \Vert_{ L^{1,\infty}_{s}(G)}:=\sup_{\lambda>0}\lambda|\{x\in G:|(1+\mathcal{R})^{\frac{s}{\nu}}f(x)|>\lambda\} <\infty. $$
The main result of this work is the following.
\begin{theorem}\label{Sjolin:Th:graded:groups} Consider $G$ to be a  graded Lie group,  let $|\cdot|$ be a homogeneous quasi-norm on $G$ and let $Q$ be  its homogeneous dimension. Let $\mathcal{R}$ be a Rockland operator of homogeneous degree $\nu>0.$ Let $K\in L^1_{\textnormal{loc}}(G\setminus \{e\})$ be a distribution of compact support and let $T:f\mapsto f\ast K,$ be the corresponding  integral operator associated to $K.$ Assume that for  $0<\alpha< \theta<1,$ $K$ satisfies the Fourier transform estimate
     \begin{equation}\label{A:alpha}
        \sup_{\pi\in \widehat{G}}\Vert \widehat{K}(\pi) (1+\pi(\mathcal{R}))^{\frac{Q\alpha}{2\nu}} \Vert_{\textnormal{op}}<\infty,
    \end{equation} 
    and the kernel condition   
    \begin{equation}
        [K]_{H_{\infty,\theta,b}}':=\sup_{0<R<b}\sup_{|y|<R}  \smallint\limits_{|x|\geq 2R^{1-\theta}}|K(y^{-1}x)-K(x)|dx  <\infty,
    \end{equation} where $0<b< 1.$
Then $T:H^1(G)\rightarrow L^1_{-\varkappa}(G)$ extends to a bounded operator provided that
\begin{equation}
    \varkappa\geq {Q(\theta-\alpha)}/{[Q(1-\theta)+2]},
\end{equation} or equivalently, $$(1+\mathcal{R})^{-\frac{\varkappa}{\nu}}T:H^{1}(G)\rightarrow L^1(G)$$ admits a bounded extension. Moreover, $T:L^1(G)\rightarrow L^{1,\infty}_{-\varkappa}(G)$ extends to a bounded operator, or equivalently,  
$$(1+\mathcal{R})^{-\frac{\varkappa}{\nu}}T:L^{1}(G)\rightarrow L^{1,\infty}(G)$$ 
admits an extension of weak $(1,1)$ type.
\end{theorem}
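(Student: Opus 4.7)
The strategy is to reduce the claim to the graded Fefferman--Stein theorem (Theorem~\ref{CR:0CZ:Graded:2022}) by regularising $T$ with a Bessel potential adapted to $\mathcal{R}$, following Sj\"olin's original Euclidean approach. Write $\mathcal{B}_\varkappa$ for the convolution kernel of $(1+\mathcal{R})^{-\varkappa/\nu}$ and set $\tilde K:=K\ast\mathcal{B}_\varkappa$, so that $\tilde T:=(1+\mathcal{R})^{-\varkappa/\nu}T$ is the right-convolution operator with kernel $\tilde K$. By the very definitions of $L^1_{-\varkappa}(G)$ and $L^{1,\infty}_{-\varkappa}(G)$, the conclusions of the theorem are equivalent to $\tilde T:H^1(G)\to L^1(G)$ being bounded and $\tilde T:L^1(G)\to L^{1,\infty}(G)$ being weak $(1,1)$. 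Thus the proof reduces to verifying, for a suitable $\theta^\ast\in(\alpha,\theta]$, the two hypotheses of Theorem~\ref{CR:0CZ:Graded:2022} for $\tilde K$.

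For the Fourier-side condition, the group Fourier transform of $\tilde K$ is $\widehat{\mathcal{B}_\varkappa}(\pi)\widehat K(\pi)=(1+\pi(\mathcal{R}))^{-\varkappa/\nu}\widehat K(\pi)$; the functional calculus for the positive Rockland operator, together with \eqref{A:alpha} and the non-commutative operator-norm manipulations developed in \cite{CR20221,CR20223}, yields
\[
\sup_{\pi\in\widehat G}\bigl\|\widehat{\tilde K}(\pi)(1+\pi(\mathcal{R}))^{Q\theta^\ast/(2\nu)}\bigr\|_{\textnormal{op}}<\infty
\]
provided $\varkappa$ exceeds a quantity of the form $c(\theta^\ast-\alpha)$ with $c$ depending on $Q,\nu$.

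The real work is verifying the H\"ormander-type inequality $[\tilde K]'_{H_{\infty,\theta^\ast}}<\infty$ on the full range $0<R<1$, while the hypothesis controls $K$ only for $R<b$. Using that left translation commutes with right convolution, one obtains the key identity
\[
\tilde K(y^{-1}x)-\tilde K(x)=\bigl((L_yK-K)\ast\mathcal{B}_\varkappa\bigr)(x),\qquad L_yK(x):=K(y^{-1}x),
\]
and it remains to bound $\mathcal{I}(R,y):=\smallint_{|x|\geq 2R^{1-\theta^\ast}}|\tilde K(y^{-1}x)-\tilde K(x)|\,dx$ uniformly for $|y|<R$, $0<R<1$. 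In the regime $R<b$ this follows from the assumption $[K]'_{H_{\infty,\theta,b}}<\infty$, the monotonicity $R^{1-\theta^\ast}\geq R^{1-\theta}$ (since $\theta^\ast\leq\theta$ and $R<1$), the integrability $\mathcal{B}_\varkappa\in L^1(G)$ (valid as $\varkappa>0$), together with standard conjugation estimates for the homogeneous quasi-norm on $G$, after splitting the $\mathcal{B}_\varkappa$-integral according to whether the inner variable has quasi-norm smaller or larger than $R^{1-\theta}$. In the complementary regime $b\leq R<1$, the kernel hypothesis fails and one instead exploits the $L^2$-bound on $K$ provided by \eqref{A:alpha} through Plancherel on $G$, combined with the near-origin singularity $\mathcal{B}_\varkappa(x)\sim |x|^{-Q+\varkappa}$ and the rapid decay at infinity (inherited from the heat kernel of $\mathcal{R}$, as in Folland--Stein). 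Matching the $R$-powers arising from the two contributions fixes the critical exponent $\varkappa=Q(\theta-\alpha)/[Q(1-\theta)+2]$.

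The main obstacle is this last step: in the graded setting, precise $L^1$-control on $\mathcal{B}_\varkappa$ and its left translates, combined with the Fourier $L^2$-estimate on $K$, must reproduce the Euclidean balance which in Sj\"olin's original argument yielded the exponent $n(\theta-\alpha)/[n(1-\theta)+2]$. Once both hypotheses of Theorem~\ref{CR:0CZ:Graded:2022} are verified for $\tilde K$ at the optimal $\theta^\ast$, a direct application of that theorem to $\tilde T$ produces the stated $H^1\to L^1$ boundedness and the weak $(1,1)$ inequality, which translate back to the announced Sobolev-type estimates for $T$.
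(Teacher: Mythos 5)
Your high-level skeleton coincides with the paper's: regularise $T$ by the Bessel potential $(1+\mathcal{R})^{-\varkappa/\nu}$, verify the two hypotheses of Theorem~\ref{CR:0CZ:Graded:2022} for the new kernel at an intermediate exponent, and conclude. However, the proposal contains a concrete error that inverts where the difficulty lies. You assert the ``monotonicity $R^{1-\theta^\ast}\geq R^{1-\theta}$ since $\theta^\ast\leq\theta$ and $R<1$''. This is backwards: for $0<R<1$ the map $t\mapsto R^{t}$ is decreasing, and $1-\theta^\ast\geq 1-\theta$, so $R^{1-\theta^\ast}\leq R^{1-\theta}$. Consequently the region $\{|x|\geq 2R^{1-\theta^\ast}\}$ over which you must verify the H\"ormander condition for $\tilde K$ strictly \emph{contains} the region $\{|x|\geq 2R^{1-\theta}\}$ controlled by the hypothesis on $K$. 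The annulus $2R^{1-\theta^\ast}\leq |x|<2R^{1-\theta}$ is exactly where the Fourier decay \eqref{A:alpha} must enter via Plancherel, balanced against the $L^1$-smallness of the localized pieces of $\mathcal{B}_\varkappa$; this balance is what produces the critical exponent $\varkappa=Q(\theta-\alpha)/[Q(1-\theta)+2]$ and the intermediate exponent $a=(Q\alpha(1-\theta)+2\theta)/(Q(1-\theta)+2)$. Your split into $R<b$ versus $b\leq R<1$, with the $L^2$ argument reserved for the second regime, therefore misplaces the main work: the regime $R<b$ is not disposed of by $[K]'_{H_{\infty,\theta,b}}<\infty$ together with $\mathcal{B}_\varkappa\in L^1$. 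The paper instead decomposes $\mathcal{B}_\varkappa\psi=\sum_k \phi_k\mathcal{B}_\varkappa$ dyadically and runs three cases in the scale $2^{-k}$ relative to $|y|$ and $|y|^{(1-\theta)/(1-\alpha)}$, combining (i) a chain of $m\sim |y|^{1-\frac{1-a}{1-\theta}}$ intermediate points $y_i$ so that the hypothesis can be applied at the scales $|y_{i-1}^{-1}y_i|^{1-\theta}$ rather than at $|y|^{1-a}$ (with the factor $m$ absorbed by $2^{-k\varkappa}$), and (ii) a Cauchy--Schwarz/Plancherel estimate on $\{|x|\leq\delta_k\}$ using \eqref{A:alpha} and the homogeneity of $\mathcal{R}^{-Q\alpha/(2\nu)}$. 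None of these mechanisms is present in your sketch.

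A second, smaller gap is on the Fourier side. For $\tilde K=K\ast\mathcal{B}_\varkappa$ one has $\widehat{\tilde K}(\pi)=(1+\pi(\mathcal{R}))^{-\varkappa/\nu}\widehat K(\pi)$ with the Bessel factor on the \emph{left}, while both the hypothesis \eqref{A:alpha} and the condition required by Theorem~\ref{CR:0CZ:Graded:2022} place the weight $(1+\pi(\mathcal{R}))^{\cdot}$ on the \emph{right} of $\widehat K(\pi)$. Since these operators do not commute with $\widehat K(\pi)$, the claimed gain ``provided $\varkappa$ exceeds $c(\theta^\ast-\alpha)$'' does not follow by the Euclidean computation you are implicitly invoking. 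This is one reason the paper first splits off $K_2=K\ast(\mathcal{B}_\varkappa(1-\psi))$ as an $L^1$ error and works with the compactly supported truncation $\psi\mathcal{B}_\varkappa$, whose Fourier transform can absorb powers of $(1+\pi(\mathcal{R}))$ on either side. As written, your proposal would need both of these points repaired before it could be considered a proof.
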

All this work will be dedicated to prove this statement. In Section \ref{preliminaries} we record the aspects of the Fourier analysis on graded Lie groups and the analysis of Rockland operators used in this work and finally, in Section \ref{proof:section} we prove Theorem \ref{Sjolin:Th:graded:groups}.

\section{Fourier analysis on graded groups}\label{preliminaries}

The notation and terminology of this paper on the analysis of homogeneous Lie groups are mostly taken 
from Folland and Stein \cite{FollandStein1982}. For the analysis of Rockland operators we will follow  \cite[Chapter 4]{FischerRuzhanskyBook}. 

\subsection{Homogeneous and graded Lie groups} 
    Let $G$ be a homogeneous Lie group. This means that $G$ is a connected and simply connected Lie group whose Lie algebra $\mathfrak{g}$ is endowed with a family of dilations $D_{r}^{\mathfrak{g}},$ $r>0,$ which are automorphisms on $\mathfrak{g}$  satisfying the following two conditions:
\begin{itemize}
\item For every $r>0,$ $D_{r}^{\mathfrak{g}}$ is a map of the form
$$ D_{r}^{\mathfrak{g}}=\textnormal{Exp}(\ln(r)A) $$
for some diagonalisable linear operator $A\equiv \textnormal{diag}[\nu_1,\cdots,\nu_n]$ on $\mathfrak{g}.$
\item $\forall X,Y\in \mathfrak{g}, $ and $r>0,$ $[D_{r}^{\mathfrak{g}}X, D_{r}^{\mathfrak{g}}Y]=D_{r}^{\mathfrak{g}}[X,Y].$ 
\end{itemize}
We call  the eigenvalues of $A,$ $\nu_1,\nu_2,\cdots,\nu_n,$ the dilations weights or weights of $G$.  The homogeneous dimension of a homogeneous Lie group $G$ is given by  $$ Q=\textnormal{\textbf{Tr}}(A)=\nu_1+\cdots+\nu_n.  $$
The dilations $D_{r}^{\mathfrak{g}}$ of the Lie algebra $\mathfrak{g}$ induce a family of  maps on $G$ defined via,
$$ D_{r}:=\exp_{G}\circ D_{r}^{\mathfrak{g}} \circ \exp_{G}^{-1},\,\, r>0, $$
where $\exp_{G}:\mathfrak{g}\rightarrow G$ is the usual exponential mapping associated to the Lie group $G.$ We refer to the family $D_{r},$ $r>0,$ as dilations on the group. If we write $rx=D_{r}(x),$ $x\in G,$ $r>0,$ then a relation on the homogeneous structure of $G$ and the Haar measure $dx$ on $G$ is given by $$ \smallint\limits_{G}(f\circ D_{r})(x)dx=r^{-Q}\smallint\limits_{G}f(x)dx. $$
    
A  Lie group is graded if its Lie algebra $\mathfrak{g}$ may be decomposed as the sum of subspaces $\mathfrak{g}=\mathfrak{g}_{1}\oplus\mathfrak{g}_{2}\oplus \cdots \oplus \mathfrak{g}_{s}$ such that $[\mathfrak{g}_{i},\mathfrak{g}_{j} ]\subset \mathfrak{g}_{i+j},$ and $ \mathfrak{g}_{i+j}=\{0\}$ if $i+j>s.$  Examples of such groups are the Heisenberg group $\mathbb{H}^n$ and more generally any stratified groups where the Lie algebra $ \mathfrak{g}$ is generated by $\mathfrak{g}_{1}$.  Here, $n$ is the topological dimension of $G,$ $n=n_{1}+\cdots +n_{s},$ where $n_{k}=\mbox{dim}\mathfrak{g}_{k}.$

A Lie algebra admitting a family of dilations is nilpotent, and hence so is its associated
connected, simply connected Lie group. The converse does not hold, i.e., not every
nilpotent Lie group is homogeneous  although they exhaust a large class, see \cite{FischerRuzhanskyBook} for details. Indeed, the main class of Lie groups under our consideration is that of graded Lie groups. A graded Lie group $G$ is a homogeneous Lie group equipped with a family of weights $\nu_j,$ all of them positive rational numbers. Let us observe that if $\nu_{i}=\frac{a_i}{b_i}$ with $a_i,b_i$ integer numbers,  and $b$ is the least common multiple of the $b_i's,$ the family of dilations 
$$ \mathbb{D}_{r}^{\mathfrak{g}}=\textnormal{Exp}(\ln(r^b)A):\mathfrak{g}\rightarrow\mathfrak{g}, $$
have integer weights,  $\nu_{i}=\frac{a_i b}{b_i}. $ So, in this paper we always assume that the weights $\nu_j,$ defining the family of dilations are non-negative integer numbers which allow us to assume that the homogeneous dimension $Q$ is a non-negative integer number. This is a natural context for the study of Rockland operators (see Remark 4.1.4 of \cite{FischerRuzhanskyBook}).

\subsection{Fourier analysis on nilpotent Lie groups}

Let $G$ be a simply connected nilpotent Lie group. Then the adjoint representation $\textnormal{ad}:\mathfrak{g}\rightarrow\textnormal{End}(\mathfrak{g})$ is nilpotent.  
Let us assume that $\pi$ is a continuous, unitary and irreducible  representation of $G,$ this means that,
\begin{itemize}
    \item $\pi\in \textnormal{Hom}(G, \textnormal{U}(H_{\pi})),$ for some separable Hilbert space $H_\pi,$ i.e. $\pi(xy)=\pi(x)\pi(y)$ and for the  adjoint of $\pi(x),$ $\pi(x)^*=\pi(x^{-1}),$ for every $x,y\in G.$
    \item The map $(x,v)\mapsto \pi(x)v, $ from $G\times H_\pi$ into $H_\pi$ is continuous.
    \item For every $x\in G,$ and $W_\pi\subset H_\pi,$ if $\pi(x)W_{\pi}\subset W_{\pi},$ then $W_\pi=H_\pi$ or $W_\pi=\emptyset.$
\end{itemize} Let $\textnormal{Rep}(G)$ be the set of unitary, continuous and irreducible representations of $G.$ The relation, {\small{
\begin{equation*}
    \pi_1\sim \pi_2\textnormal{ if and only if, there exists } A\in \mathscr{B}(H_{\pi_1},H_{\pi_2}),\textnormal{ such that }A\pi_{1}(x)A^{-1}=\pi_2(x), 
\end{equation*}}}for every $x\in G,$ is an equivalence relation and the unitary dual of $G,$ denoted by $\widehat{G}$ is defined via
$
    \widehat{G}:={\textnormal{Rep}(G)}/{\sim}.
$ Let us denote by $d\pi$ the Plancherel measure on $\widehat{G}.$ 
The Fourier transform of $f\in \mathscr{S}(G), $ (this means that $f\circ \textnormal{exp}_G\in \mathscr{S}(\mathfrak{g})$, with $\mathfrak{g}\simeq \mathbb{R}^{\dim(G)}$) at $\pi\in\widehat{G},$ is defined by 
\begin{equation*}
    \widehat{f}(\pi)=\smallint\limits_{G}f(x)\pi(x)^*dx:H_\pi\rightarrow H_\pi,\textnormal{   and   }\mathscr{F}_{G}:\mathscr{S}(G)\rightarrow \mathscr{S}(\widehat{G}):=\mathscr{F}_{G}(\mathscr{S}(G)).
\end{equation*}

If we identify one representation $\pi$ with its equivalence class, $[\pi]=\{\pi':\pi\sim \pi'\}$,  for every $\pi\in \widehat{G}, $ the Kirillov trace character $\Theta_\pi$ defined by  $$(\Theta_{\pi},f):
=\textnormal{\textbf{Tr}}(\widehat{f}(\pi)),$$ is a tempered distribution on $\mathscr{S}(G).$ In particular, the identity
$
    f(e_G)=\smallint\limits_{\widehat{G}}(\Theta_{\pi},f)d\pi,
$ 
implies the Fourier inversion formula $f=\mathscr{F}_G^{-1}(\widehat{f}),$ where
\begin{equation*}
    (\mathscr{F}_G^{-1}\sigma)(x):=\smallint\limits_{\widehat{G}}\textnormal{\textbf{Tr}}(\pi(x)\sigma(\pi))d\pi,\,\,x\in G,\,\,\,\,\mathscr{F}_G^{-1}:\mathscr{S}(\widehat{G})\rightarrow\mathscr{S}(G),
\end{equation*}is the inverse Fourier  transform. In this context, the Plancherel theorem takes the form $\Vert f\Vert_{L^2(G)}=\Vert \widehat{f}\Vert_{L^2(\widehat{G})}$,  where  $$L^2(\widehat{G}):=\smallint\limits_{\widehat{G}}H_\pi\otimes H_{\pi}^*d\pi,$$ is the Hilbert space endowed with the norm: $\Vert \sigma\Vert_{L^2(\widehat{G})}=(\int_{\widehat{G}}\Vert \sigma(\pi)\Vert_{\textnormal{HS}}^2d\pi)^{\frac{1}{2}}.$

\subsection{Homogeneous linear operators and Rockland operators} A linear operator $T:C^\infty(G)\rightarrow \mathscr{D}'(G)$ is homogeneous of  degree $\nu\in \mathbb{C}$ if for every $r>0$ the equality 
\begin{equation*}
T(f\circ D_{r})=r^{\nu}(Tf)\circ D_{r}
\end{equation*}
holds for every $f\in \mathscr{D}(G). $
If for every representation $\pi\in\widehat{G},$ $\pi:G\rightarrow U({H}_{\pi}),$ we denote by ${H}_{\pi}^{\infty}$ the set of smooth vectors, that is, the space of elements $v\in {H}_{\pi}$ such that the function $x\mapsto \pi(x)v,$ $x\in \widehat{G},$ is smooth,  a Rockland operator is a left-invariant differential operator $\mathcal{R}$ which is homogeneous of positive degree $\nu=\nu_{\mathcal{R}}$ and such that, for every unitary irreducible non-trivial representation $\pi\in \widehat{G},$ $\pi({R})$ is injective on ${H}_{\pi}^{\infty};$ $\sigma_{\mathcal{R}}(\pi)=\pi(\mathcal{R})$ is the symbol associated to $\mathcal{R}.$ It coincides with the infinitesimal representation of $\mathcal{R}$ as an element of the universal enveloping algebra. It can be shown that a Lie group $G$ is graded if and only if there exists a differential Rockland operator on $G.$ If the Rockland operator is formally self-adjoint, then $\mathcal{R}$ and $\pi(\mathcal{R})$ admit self-adjoint extensions on $L^{2}(G)$ and ${H}_{\pi},$ respectively. Now if we preserve the same notation for their self-adjoint
extensions and we denote by $E$ and $E_{\pi}$  their spectral measures, we will denote by
$$ f(\mathcal{R})=\smallint\limits_{-\infty}^{\infty}f(\lambda) dE(\lambda),\,\,\,\textnormal{and}\,\,\,\pi(f(\mathcal{R}))\equiv f(\pi(\mathcal{R}))=\smallint\limits_{-\infty}^{\infty}f(\lambda) dE_{\pi}(\lambda), $$ the functions defined by the functional calculus. 
In general, we will reserve the notation $\{dE_A(\lambda)\}_{0\leq\lambda<\infty}$ for the spectral measure associated with a positive and self-adjoint operator $A$ on a Hilbert space $H.$ 

We now recall a lemma on dilations on the unitary dual $\widehat{G},$ which will be useful in our analysis of spectral multipliers.   For the proof, see Lemma 4.3 of \cite{FischerRuzhanskyBook}.
\begin{lemma}\label{dilationsrepre}
For every $\pi\in \widehat{G}$ let us define  
\begin{equation}\label{dilations:repre}
\pi^{(r)}(x)=  D_{r}(\pi)(x)= (r\cdot \pi)(x)=\pi(r\cdot x)\equiv \pi(D_r(x)),  
\end{equation}
 for every $r>0$ and all $x\in G.$ Then, if $f\in L^{\infty}(\mathbb{R})$ then $f(\pi^{(r)}(\mathcal{R}))=f({r^{\nu}\pi(\mathcal{R})}).$
\end{lemma}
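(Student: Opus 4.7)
The plan is to first establish the identity $\pi^{(r)}(\mathcal{R}) = r^{\nu}\pi(\mathcal{R})$ on the level of the infinitesimal representation acting on smooth vectors, and then lift it to arbitrary bounded Borel functions via the spectral theorem.

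First, since $D_r:G\to G$ is a Lie group automorphism, the map $\pi^{(r)}=\pi\circ D_r$ is again a strongly continuous, unitary, irreducible representation of $G$, and in particular its space of smooth vectors coincides with $H_\pi^{\infty}$. For $X\in\mathfrak{g}$ and $v\in H_\pi^{\infty}$, a direct computation using the intertwining $\exp_G\circ D_r^{\mathfrak{g}}=D_r\circ\exp_G$ yields
$$ d\pi^{(r)}(X)v=\frac{d}{dt}\bigg|_{t=0}\pi(D_r\exp_G(tX))v=\frac{d}{dt}\bigg|_{t=0}\pi(\exp_G(t\,D_r^{\mathfrak{g}}X))v=d\pi(D_r^{\mathfrak{g}}X)v. $$
Extending uniquely to an algebra homomorphism on the universal enveloping algebra $\mathfrak{U}(\mathfrak{g})$, one obtains $d\pi^{(r)}(U)=d\pi(D_r^{\mathfrak{g}}(U))$ for every $U\in\mathfrak{U}(\mathfrak{g})$.

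Second, I would invoke the fact that $\mathcal{R}$, realised as an element of $\mathfrak{U}(\mathfrak{g})$, is homogeneous of degree $\nu$ in the sense that $D_r^{\mathfrak{g}}(\mathcal{R})=r^{\nu}\mathcal{R}$. This is equivalent to the operator-theoretic relation $\mathcal{R}(f\circ D_r)=r^{\nu}(\mathcal{R}f)\circ D_r$ built into the definition of a Rockland operator, and it propagates to the enveloping algebra because $D_r^{\mathfrak{g}}$ acts there as an algebra automorphism with weight $\nu_j$ on each graded component. Combined with the previous step this gives
$$ \pi^{(r)}(\mathcal{R})=\pi(D_r^{\mathfrak{g}}(\mathcal{R}))=\pi(r^{\nu}\mathcal{R})=r^{\nu}\pi(\mathcal{R}) $$
on the common smooth domain $H_\pi^{\infty}\subset H_\pi$. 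Since $\pi(\mathcal{R})|_{H_\pi^{\infty}}$ is essentially self-adjoint (a fundamental property in the Rockland/Helffer--Nourrigat theory recorded in \cite{FischerRuzhanskyBook}), the identity persists for the positive self-adjoint closures, so $\pi^{(r)}(\mathcal{R})=r^{\nu}\pi(\mathcal{R})$ holds as unbounded self-adjoint operators on $H_\pi$.

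Finally, I would apply the spectral theorem to conclude. If $A\geq 0$ is a positive self-adjoint operator with spectral measure $E_A$ and $c>0$, then the spectral measure of $cA$ is the pushforward $E_{cA}(B)=E_A(c^{-1}B)$ for every Borel set $B\subset[0,\infty)$. Applying this with $A=\pi(\mathcal{R})$ and $c=r^{\nu}$, the Borel functional calculus gives
$$ f(\pi^{(r)}(\mathcal{R}))=f(r^{\nu}\pi(\mathcal{R}))=\smallint\limits_{0}^{\infty}f(r^{\nu}\lambda)\,dE_{\pi(\mathcal{R})}(\lambda) $$
for every $f\in L^{\infty}(\mathbb{R})$, which is exactly the asserted identity. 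The only delicate point is the passage from the enveloping-algebra identity on $H_\pi^{\infty}$ to the level of self-adjoint closures; this relies on the essential self-adjointness of $\pi(\mathcal{R})$ on smooth vectors, which is standard in the Rockland calculus and is therefore not a genuine new obstacle in the present proof.
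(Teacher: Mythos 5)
Your proof is correct. Note that the paper does not actually prove this lemma itself: it defers to Lemma 4.3 of \cite{FischerRuzhanskyBook}, so there is no in-text argument to match step by step. Your route --- compute $d\pi^{(r)}(X)=d\pi(D_r^{\mathfrak{g}}X)$ on $H_\pi^{\infty}$, extend to the enveloping algebra, use $D_r^{\mathfrak{g}}(\mathcal{R})=r^{\nu}\mathcal{R}$, pass to self-adjoint closures via essential self-adjointness of $\pi(\mathcal{R})$ on smooth vectors, and finish with the pushforward of the spectral measure under $\lambda\mapsto r^{\nu}\lambda$ --- is the standard infinitesimal argument and is essentially what underlies the cited reference. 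It is worth observing that later, inside the proof of the main theorem, the authors re-derive the same scalar identity by a genuinely different mechanism: they use the kernel homogeneity $\kappa(r^{\nu}\mathcal{R})\delta=r^{-Q}[\kappa(\mathcal{R})\delta](r^{-1}\cdot)$ together with the Fourier dilation identity $\widehat{f_r}(\pi)=\widehat{f}(r\cdot\pi)$ of Remark \ref{DilationLebesgue}. That global, convolution-kernel route avoids any discussion of domains and self-adjoint closures but requires $\kappa(\mathcal{R})\delta$ to make sense as a tempered distribution; your infinitesimal route is more self-contained at the operator level but must (and does) address the passage from $H_\pi^{\infty}$ to the closures, which you correctly identify as the only delicate point. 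Both are valid; yours has the advantage of working directly for arbitrary bounded Borel $f$ without reference to the inverse Fourier transform of $f(\mathcal{R})$.
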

\begin{remark}{{For instance, for any $\alpha\in \mathbb{N}_0^n,$ and for an arbitrary family $X_1,\cdots, X_n,$ of left-invariant  vector-fields we will use the notation
\begin{equation}
    [\alpha]:=\sum_{j=1}^n\nu_j\alpha_j,
\end{equation}for the homogeneity degree of the operator $X^{\alpha}:=X_1^{\alpha_1}\cdots X_{n}^{\alpha_n},$ whose order is $|\alpha|:=\sum_{j=1}^n\alpha_j.$}}
\begin{remark}\label{DilationLebesgue}
By considering the dilation $r\cdot x=D_{r}(x),$ $x\in G,$ $r>0,$ then a relation  between the homogeneous structure of $G$ and the Haar measure $dx$ on $G$ is given by (see \cite[Page 100]{FischerRuzhanskyBook}) $$ \smallint\limits_{G}(f\circ D_{r})(x)dx=r^{-Q}\smallint\limits_{G}f(x)dx. $$
Note that if $f_{r}:=r^{-Q}f(r^{-1}\cdot),$ then 
\begin{equation}\label{Eq:dilatedFourier}
    \widehat{f}_{r}(\pi)=\smallint\limits_{G}r^{-Q}f(r^{-1}\cdot x)\pi(x)^*dx=\smallint\limits_{G}f(y)\pi(r\cdot y)^{*}dy=\widehat{f}(r\cdot \pi),
\end{equation}for any $\pi\in \widehat{G}$ and all $r>0,$ with $(r\cdot \pi)(y)=\pi(r\cdot y),$ $y\in G,$ as in \eqref{dilations:repre}.
\end{remark}
\end{remark} 

\section{Proof of the main theorem}\label{proof:section}

We will start our analysis for the proof of Theorem \ref{Sjolin:Th:graded:groups} by analysing the operator
$ 
\mathfrak{G}_{a}:=\left(\frac{\mathcal{R}}{1+\mathcal{R}}\right)^{\frac{a}{\nu}}   , 
$ 
for any $a>0,$ as a convolution operator with a finite measure. 
This analysis will be addressed in Lemma \ref{Lemma:quotient:Rockland} below where we extend an observation done e.g. in  Stein \cite[Page 133]{Stein1970} in  the case of the Laplace operator to general Rockland operators. During this work we will denote by $\mathcal{B}_{s}$ to the right convolution kernel of the operator $(1+\mathcal{R})^{-\frac{s}{\nu}},$ for any $s\in \mathbb{R}.$

\subsection{The quotient between the Riesz and the Bessel potential}
There is an intimate connection between the Bessel potential and the Riesz potentials of Rockland operators. This affinity between the two is given in precision in the following lemma.

\begin{lemma}\label{Lemma:quotient:Rockland}
Let $\alpha>0,$ and let $\mathcal{R}$ be a Rockland operator on $G$ of homogeneity degree $\nu>0.$ There exists a finite measure $\mu_\alpha$ on $G$ such that its Fourier transform is given by
\begin{equation}\label{mu:alpha}
    \widehat{\mu}_\alpha(\pi)=\left(\frac{\pi(\mathcal{R})}{1+\pi(\mathcal{R})}\right)^{\frac{\alpha}{\nu}},\,\pi\in\widehat{G}.
\end{equation}
\end{lemma}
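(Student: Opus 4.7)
The idea is to mimic Stein's classical trick for the Euclidean Laplacian (as referenced in \cite{Stein1970}) and rewrite the spectral symbol via the identity
\begin{equation*}
\left(\frac{\lambda}{1+\lambda}\right)^{\alpha/\nu}=\left(1-\frac{1}{1+\lambda}\right)^{\alpha/\nu}=\sum_{k=0}^{\infty}(-1)^{k}\binom{\alpha/\nu}{k}\frac{1}{(1+\lambda)^{k}},\quad \lambda\geq 0,
\end{equation*}
valid because $(1+\lambda)^{-1}\in(0,1]$ places us inside the radius of convergence of the binomial series for $(1-t)^{\alpha/\nu}$. By the functional calculus for the positive self-adjoint operator $\pi(\mathcal{R})$, the same identity holds with $\lambda$ replaced by $\pi(\mathcal{R})$ and convergence in the operator norm on $H_{\pi}$, because each term has norm $\leq 1$ and the scalar series converges uniformly on $[0,\infty)$.

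The first step is to identify the convolution kernels of $(1+\mathcal{R})^{-k}$, $k\geq 1$. Using the subordination formula
\begin{equation*}
(1+\mathcal{R})^{-k}=\frac{1}{\Gamma(k)}\smallint\limits_{0}^{\infty}t^{k-1}e^{-t}e^{-t\mathcal{R}}\,dt,
\end{equation*}
and the positivity and stochastic completeness of the heat kernel $p_{t}$ of $\mathcal{R}$ (which I take from \cite[Chapter 4]{FischerRuzhanskyBook}), the kernel $\mathcal{B}_{k\nu}$ is a non-negative integrable function with $\Vert\mathcal{B}_{k\nu}\Vert_{L^{1}(G)}=1$. Thus every dyadic term contributes a probability density on $G$.

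Next I would control the binomial coefficients. For $\alpha/\nu>0$ non-integer, the standard asymptotics
\begin{equation*}
\left|\binom{\alpha/\nu}{k}\right|\sim \frac{1}{|\Gamma(-\alpha/\nu)|\,k^{1+\alpha/\nu}},\quad k\to\infty,
\end{equation*}
(with the obvious termination if $\alpha/\nu\in\mathbb{N}$) yields $\sum_{k\geq 0}|\binom{\alpha/\nu}{k}|<\infty$ since $\alpha/\nu>0$. I can therefore \emph{define}
\begin{equation*}
\mu_{\alpha}:=\delta_{e_{G}}+\sum_{k=1}^{\infty}(-1)^{k}\binom{\alpha/\nu}{k}\mathcal{B}_{k\nu},
\end{equation*}
as an absolutely convergent series in the Banach space of finite Borel measures on $G$, with total variation bounded by $\sum_{k\geq 0}|\binom{\alpha/\nu}{k}|<\infty$.

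Finally, I would verify \eqref{mu:alpha}. Since $\widehat{\delta}_{e_{G}}(\pi)=I_{H_{\pi}}$ and $\widehat{\mathcal{B}}_{k\nu}(\pi)=(1+\pi(\mathcal{R}))^{-k}$ by definition of $\mathcal{B}_{s}$, term-by-term application of the group Fourier transform (justified by the absolute convergence in the measure norm, which dominates the operator-norm series on the Fourier side) gives
\begin{equation*}
\widehat{\mu}_{\alpha}(\pi)=\sum_{k=0}^{\infty}(-1)^{k}\binom{\alpha/\nu}{k}(1+\pi(\mathcal{R}))^{-k}=\left(\frac{\pi(\mathcal{R})}{1+\pi(\mathcal{R})}\right)^{\alpha/\nu},
\end{equation*}
the last equality being the spectral identity noted above. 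The main obstacle, and the only place one must be careful, is the bookkeeping between the two modes of convergence: absolute summability in $M(G)$ for the kernel side and operator-norm convergence of functions of $\pi(\mathcal{R})$ on the Fourier side, but both are controlled by the same scalar estimate $\sum_{k}|\binom{\alpha/\nu}{k}|<\infty$.
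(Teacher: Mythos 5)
Your proposal follows essentially the same route as the paper: expand $(1-t)^{\alpha/\nu}$ in its binomial series, substitute $t=(1+\lambda)^{-1}$, transfer the identity to $\pi(\mathcal{R})$ by the spectral calculus, and define $\mu_\alpha=\delta+\sum_{k\geq 1}(-1)^k\binom{\alpha/\nu}{k}\mathcal{B}_{k\nu}$; the paper writes the coefficients abstractly as $A_{m,\alpha/\nu}$ and simply asserts $\sum_m|A_{m,\alpha/\nu}|<\infty$, whereas you justify this via the asymptotics $|\binom{\alpha/\nu}{k}|\sim c\,k^{-1-\alpha/\nu}$, which is a welcome addition.

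One step of your write-up is however not correct as stated. You invoke ``the positivity \ldots of the heat kernel $p_t$ of $\mathcal{R}$'' to conclude that each $\mathcal{B}_{k\nu}$ is a non-negative probability density. For a general positive Rockland operator of homogeneous degree $\nu>2$ (e.g.\ $\mathcal{R}=\sum_j X_j^4$ on a stratified group, or $\Delta^2$ already on $\mathbb{R}^n$), the heat kernel is \emph{not} non-negative; positivity is special to second-order sub-Laplacians and is not claimed in \cite[Chapter 4]{FischerRuzhanskyBook}. Consequently $\mathcal{B}_{k\nu}\geq 0$ and $\Vert\mathcal{B}_{k\nu}\Vert_{L^1}=1$ may both fail. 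The argument is easily repaired: what you actually need is only a bound $\sup_{k\geq 1}\Vert\mathcal{B}_{k\nu}\Vert_{L^1(G)}<\infty$, and this follows from your own subordination formula together with the homogeneity of the heat semigroup, since $\Vert p_t\Vert_{L^1(G)}=\Vert p_1\Vert_{L^1(G)}=:c_0$ for all $t>0$ (the kernels are dilates of one another with the $L^1$-normalizing factor $t^{-Q/\nu}$), whence
\begin{equation*}
\Vert\mathcal{B}_{k\nu}\Vert_{L^1(G)}\leq\frac{1}{\Gamma(k)}\smallint\limits_{0}^{\infty}t^{k-1}e^{-t}\Vert p_t\Vert_{L^1(G)}\,dt=c_0 .
\end{equation*}
Note that a naive bound $\Vert\mathcal{B}_{k\nu}\Vert_{L^1}\leq\Vert\mathcal{B}_\nu\Vert_{L^1}^k$ would \emph{not} suffice when $\Vert\mathcal{B}_\nu\Vert_{L^1}>1$, since the binomial series has radius of convergence $1$; so the uniform-in-$k$ bound from subordination is genuinely needed. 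With this substitution the rest of your argument (absolute convergence in $M(G)$ and term-by-term Fourier transform) goes through and matches the paper's proof.
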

\begin{proof}For the proof of Lemma \ref{Lemma:quotient:Rockland} let us use the expansion
\begin{equation}
    (1-t)^{\alpha/\nu}=1+\sum_{m=1}^{\infty}A_{m,\alpha/\nu}t^m,\,\,|t|<1,\,\sum_{m=1}^\infty|A_{m,\alpha/\nu}|<\infty,
\end{equation}which is still valid when $t\rightarrow 1^{-}$ because $(1-t)^{\alpha/\nu}$ remains bounded  for $\alpha>0$. Let $dE_{\pi(\mathcal{R})}$ be the spectral measure of the operator $\pi(\mathcal{R}).$ With $t=\frac{1}{1+\lambda},$ $\lambda\geq 0,$ we have that
\begin{align*}
    \left(\frac{\lambda}{1+\lambda}\right)^{\frac{\alpha}{\nu}}= \left(1-\frac{1}{1+\lambda}\right)^{\frac{\alpha}{\nu}}=1+\sum_{m=1}^{\infty}A_{m,\alpha/\nu}(1+\lambda)^{-m},
\end{align*}and then the functional calculus of the operator $\pi(\mathcal{R})$ implies that
\begin{align*}
    \left(\frac{\pi(\mathcal{R})}{1+\pi(\mathcal{R})}\right)^{\frac{\alpha}{\nu}} &=\smallint_{0}^{\infty} \left(\frac{\lambda}{1+\lambda}\right)^{\frac{\alpha}{\nu}}dE_{\pi(\mathcal{R})}(\lambda)=1+\sum_{m=1}^{\infty}\smallint_{0}^{\infty}A_{m,\alpha/\nu}(1+\lambda)^{-m}dE_{\pi(\mathcal{R})}(\lambda)\\
   &=1+\sum_{m=1}^{\infty}A_{m,\alpha/\nu}(1+\pi(\mathcal{R}))^{-m}.
\end{align*}In consequence, the required measure $\mu_\alpha$ is given by
\begin{equation}
    \mu_\alpha=\delta+\sum_{m=1}^{\infty}A_{m,\alpha/\nu}\mathcal{B}_{m\nu}(x)dx.
\end{equation}Indeed, note that $\mu_\alpha$ satisfies that $\widehat{\mu}_\alpha(\pi)= \left({\pi(\mathcal{R})}/[{1+\pi(\mathcal{R})]}\right)^{\frac{\alpha}{\nu}},$ $\pi\in\widehat{G}.$  The proof of Lemma \ref{Lemma:quotient:Rockland} is complete.
\end{proof}
\begin{corollary} Let $\alpha>0,$ and let $\mathcal{R}$ be a Rockland operator on $G$ of homogeneity degree $\nu>0.$ Then the operator
\begin{equation}
    \left(\frac{\mathcal{R}}{1+\mathcal{R}}\right)^{\frac{\alpha}{\nu}}:L^p(G)\rightarrow L^p(G),
\end{equation}extends to a bounded operator for all $1\leq p\leq \infty.$

\end{corollary}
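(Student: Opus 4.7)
The plan is to reduce the corollary to a direct application of Young's convolution inequality by invoking Lemma \ref{Lemma:quotient:Rockland}. Since $\widehat{\mu}_\alpha(\pi) = \left(\pi(\mathcal{R})/[1+\pi(\mathcal{R})]\right)^{\alpha/\nu}$ for all $\pi \in \widehat{G}$, the multiplier on the Fourier side coincides with the group Fourier transform of $\mu_\alpha$. By the spectral theorem (extended via the Plancherel formula on $\widehat{G}$) this means that, at least on a dense class (say $\mathscr{S}(G) \cap L^2(G)$), the operator $\mathfrak{G}_\alpha := (\mathcal{R}/(1+\mathcal{R}))^{\alpha/\nu}$ acts as right convolution with the finite measure $\mu_\alpha$, i.e.
\begin{equation*}
    \mathfrak{G}_\alpha f(x) = (f \ast \mu_\alpha)(x), \qquad f \in \mathscr{S}(G).
\end{equation*}

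Once this identification is in place, the boundedness claim is immediate from the fact that $\mu_\alpha$ has finite total variation. Indeed, from the explicit expression
\begin{equation*}
    \mu_\alpha = \delta + \sum_{m=1}^\infty A_{m,\alpha/\nu}\, \mathcal{B}_{m\nu}(x)\,dx,
\end{equation*}
provided by the lemma, one has $\|\mu_\alpha\|_{M(G)} \leq 1 + \sum_{m=1}^\infty |A_{m,\alpha/\nu}|\,\|\mathcal{B}_{m\nu}\|_{L^1(G)}$. Each Bessel kernel $\mathcal{B}_{m\nu}$ associated to the positive Rockland operator $\mathcal{R}$ belongs to $L^1(G)$ (with $\|\mathcal{B}_{m\nu}\|_{L^1}\leq 1$ by standard subordination/heat-semigroup arguments), so the series converges absolutely in view of $\sum_m |A_{m,\alpha/\nu}| < \infty$. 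Thus $\mu_\alpha$ is genuinely a finite Radon measure on $G$.

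Having this in hand, Young's inequality for convolutions on a locally compact group yields
\begin{equation*}
    \|\mathfrak{G}_\alpha f\|_{L^p(G)} = \|f \ast \mu_\alpha\|_{L^p(G)} \leq \|\mu_\alpha\|_{M(G)}\,\|f\|_{L^p(G)}
\end{equation*}
for every $1 \leq p \leq \infty$ and every $f$ in the dense subclass; a standard density argument then gives the bounded extension to all of $L^p(G)$. The only non-routine step is the identification of $\mathfrak{G}_\alpha$ with convolution by $\mu_\alpha$, i.e.\ justifying the passage from the spectral/Fourier formula to the convolution realisation outside of $L^2$; this follows by first establishing the equality on $\mathscr{S}(G)$ (where both sides are Schwartz by the Fischer--Ruzhansky calculus of Rockland operators) via Fourier inversion, and then extending. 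I would expect no essential obstacle beyond this bookkeeping.
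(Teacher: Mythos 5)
Your argument is correct and rests on the same key ingredient as the paper's proof, namely Lemma \ref{Lemma:quotient:Rockland}, which realises $\left(\mathcal{R}/(1+\mathcal{R})\right)^{\alpha/\nu}$ as right convolution with the finite measure $\mu_\alpha=\delta+\sum_m A_{m,\alpha/\nu}\mathcal{B}_{m\nu}\,dx$. The only (cosmetic) difference is that you conclude with a single application of Young's inequality $\Vert f\ast\mu_\alpha\Vert_{L^p}\leq \Vert\mu_\alpha\Vert_{M(G)}\Vert f\Vert_{L^p}$ valid for all $1\leq p\leq\infty$ on the unimodular group $G$, whereas the paper establishes the endpoints $p=1$ and $p=\infty$ (the latter by duality) and then interpolates; both are fine, and yours is if anything slightly more direct.
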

\begin{proof}
 The action of the operator $\left(\frac{\mathcal{R}}{1+\mathcal{R}}\right)^{\frac{\alpha}{\nu}}$ on functions in $L^1(G)$ is obtained from the right convolution with the measure $\mu_\alpha$ in \eqref{mu:alpha}. So, $\left(\frac{\mathcal{R}}{1+\mathcal{R}}\right)^{\frac{\alpha}{\nu}}$ is bounded from $L^1(G)$ into $L^1(G).$ By the duality argument $\left(\frac{\mathcal{R}}{1+\mathcal{R}}\right)^{\frac{\alpha}{\nu}}$ is bounded from $L^\infty(G)$ into $L^\infty(G).$ Because $\left(\frac{\mathcal{R}}{1+\mathcal{R}}\right)^{\frac{\alpha}{\nu}}$ is bounded on $L^2(G),$ the Marcinkiewicz interpolation theorem implies  the boundedness of $ \left(\frac{\mathcal{R}}{1+\mathcal{R}}\right)^{\frac{\alpha}{\nu}}:L^p(G)\rightarrow L^p(G),$ for all $1\leq p\leq \infty.$
\end{proof}

\subsection{Boundedness of strongly singular integral operators} We are going to prove our main Theorem \ref{Sjolin:Th:graded:groups}. For this, we precise the notations.
\begin{itemize}
    \item[-] Consider $G$ to be a  graded Lie group,  let $|\cdot|$ be a homogeneous quasi-norm on $G$ and let $Q$ be  its homogeneous dimension.
    \item[-] Let $\mathcal{R}$ be a Rockland operator of homogeneous degree $\nu>0.$ Let $K\in L^1_{\textnormal{loc}}(G\setminus \{e\})$ and $T:f\mapsto f\ast K,$ the corresponding  integral operator associated to $K.$
   \item[-] Assume that for  $0<\alpha<\theta<1,$ $K$ satisfies the Fourier transform estimate
     \begin{equation}\label{A:alpha:2:2}
        \sup_{\pi\in \widehat{G}}\Vert \widehat{K}(\pi) (1+\pi(\mathcal{R}))^{\frac{Q\alpha}{2\nu}} \Vert_{\textnormal{op}}<\infty,
    \end{equation} 
    and the kernel condition   
    \begin{equation}
        [K]_{H_{\infty,\theta,b}}':=\sup_{|y|<b}  \smallint\limits_{|x|\geq 2R^{1-\theta}}|K(y^{-1}x)-K(x)|dx  <\infty
,\,0<b<1.    \end{equation}
\end{itemize}
We are going to prove that $T:H^1(G)\rightarrow L^1_{-\varkappa}(G)$ extends to a bounded operator provided that
\begin{equation}
    \varkappa\geq {Q(\theta-\alpha)}/{[Q(1-\theta)+2]},
\end{equation} or equivalently, that $$(1+\mathcal{R})^{-\frac{\varkappa}{\nu}}T:H^{1}(G)\rightarrow L^1(G)$$ admits a bounded extension. In the same way, we have to prove that 
$$(1+\mathcal{R})^{-\frac{\varkappa}{\nu}}T:L^{1}(G)\rightarrow L^{1,\infty}(G)$$ 
admits a bounded extension, which proves that $T:L^1(G)\rightarrow L^{1,\infty}_{-\varkappa}(G)$ extends to a bounded operator.
\begin{proof}[Proof of Theorem \ref{Sjolin:Th:graded:groups}]
It is suffice to consider the critical case
$$  \varkappa:=\frac{Q(\theta-\alpha)}{Q(1-\theta)+2}. $$
Indeed, having proved the boundedness of $T:H^1(G)\rightarrow L^1_{-\varkappa}(G)$ and of $T:L^1(G)\rightarrow L^{1,\infty}_{-\varkappa}(G),$   for any $\varkappa'>\varkappa=\frac{Q(\theta-\alpha)}{Q(1-\theta)+2},$ we have the continuous inclusions
$$ L^{1}_{-\varkappa}(G)\hookrightarrow L^{1}_{-\varkappa'}(G),\,L^{1,\infty}_{-\varkappa}(G)\hookrightarrow L^{1,\infty}_{-\varkappa'}(G)$$ implying also the existence of the bounded extensions  $T:H^1(G)\rightarrow L^1_{-\varkappa'}(G)$ and $T:L^1(G)\rightarrow L^{1,\infty}_{-\varkappa'}(G).$

Let us choose $\phi\in C^{\infty}_0(G)$ so that
\begin{equation}\label{the:auxiliar:phi}
 \phi\geq 0,\quad \textnormal{  with }   \textnormal{supp}[\phi]\subset\{x\in G: 1/2<|x|<2\},\,\,\phi(x)=1 \textnormal{ if } \frac{3}{4}<|x|<1,
\end{equation} and such that $$\forall x\in G, \,|x|<1,\,\,\sum_{k=0}^{\infty}\phi(2^k\cdot x)=1.$$ Also, for $x\in G,$ define $\phi_k(x):=\phi(2^k\cdot x)$ and $$\psi(x)=\sum_{k=0}^\infty\phi_{k}(x).$$ 
The kernel of the operator $(1+\mathcal{R})^{-\frac{\varkappa}{\nu}}T$ is given by $K\ast \mathcal{B}_{\varkappa}.$ Let us use the decomposition
$$ K\ast \mathcal{B}_{\varkappa}=K\ast(\mathcal{B}_{\varkappa}\psi)+K\ast(\mathcal{B}_{\varkappa}(1-\psi))=K_1+K_2,\,K_1:=K\ast(\mathcal{B}_{\varkappa}\psi).  $$

Let us prove that  $K_2=K\ast (\mathcal{B}_{\varkappa}(1-\psi)) \in \mathscr{S}(G)$ is a smooth function in $L^1(G).$ Indeed, from the properties of $\phi,$ we have that $\psi(x)=1$ for all $x\in G$ with $|x|<1,$ and then $1-\psi(x)\equiv 0$ when $|x|<1.$ 

On the other hand, the function $\mathcal{B}_{\varkappa}$ decreases rapidly for $|x|\geq 1.$ Indeed, for any $N\in \mathbb{N},$ there is $C_N>0,$ such that  (see \cite[Theorem 5.4.1]{FischerRuzhanskyBook})
\begin{align*}
    |\mathcal{B}_{\varkappa}(x)|\leq C_N|x|^{-N},\,|x|>1.
\end{align*}
Since, $x\in \textnormal{supp}(\phi_k)$ implies that $|2^k\cdot x|\in (1/2,2),$ then 
\begin{equation}
\forall k\geq 0,\,    \textnormal{supp}(\phi_k)\subset \{x\in G:2^{-k-1}<|x|<2^{-k+1}\} .
\end{equation}So, for any $x\in G:\,|x|>2,$ $\psi(x)=\sum_{k=0}^{\infty}\phi(2^k\cdot x)=0.$ 

In conclusion the function $(1-\psi)\mathcal{B}_\varkappa$ 
has its support in the complement of the set $\{x\in G:|x|<1\}$ and  $(1-\phi)\mathcal{B}_\varkappa \in L^1(G)\cap C^{\infty}(\{x\in G:|x|>1\}).$

So, the left convolution operator $T_{K_2}$ associated to $K_2$ is bounded from $L^1(G)$ into $L^1(G).$ The embedding $H^{1}(G)\hookrightarrow L^{1}(G)$ implies that $T_{K_2}:H^{1}(G)\hookrightarrow L^1(G)$ is bounded.  Note also that the boundedness of $T_{K_2}$ from $L^{1}(G)$ into $L^1(G)$ implies its boundedness from $L^{1}(G)$ into $L^{1,\infty}(G)$ in view of the inclusion $L^{1}(G)\hookrightarrow L^{1,\infty}(G).$

Now, to continue with the proof it  suffices to demonstrate the boundedness of $T_{K_1}=T-T_{K_2}$ from $H^{1}(G)$ into $ L^1(G),$ and from $L^{1}(G)$ into $L^{1,\infty}(G).$ For this, we will prove that $K_1=K\ast (\mathcal{B}_\varkappa\psi)$ satisfies the conditions
\begin{equation}\label{Fourier:growth:1New:1}
        \sup_{\pi\in \widehat{G}}\Vert \widehat{K}_1(\pi) (1+\pi(\mathcal{R}))^{\frac{Q a}{2\nu}} \Vert_{\textnormal{op}}<\infty,
    \end{equation} 
    and the kernel condition   
    \begin{equation}\label{GS:CZ:cond:22:2}
        [K]_{H_{\infty,\theta}}':=\sup_{0<R<1}\sup_{|y|<R}  \smallint\limits_{|x|\geq 2R^{1-a}}|K_1(y^{-1}x)-K_1(x)|dx  <\infty,
    \end{equation}
with 
\begin{equation}
    a:=\frac{Q\alpha(1-\theta)+2\theta}{Q(1-\theta)+2}\in (0,1).
\end{equation}
Note also that the hypothesis  $\alpha<\theta,$ implies that $Q\alpha(1-\theta)+2\theta<Q\theta(1-\theta)+2\theta=\theta[Q(1-\theta)+2]$ which (by dividing both sides of this inequality  by $Q(1-\theta)+2$) implies the estimate
\begin{align}\label{a:less:than:theta}
    0<a=\frac{Q\alpha(1-\theta)+2\theta}{Q(1-\theta)+2}<\theta,
\end{align} allowing the use of Theorem \ref{CR:0CZ:Graded:2022}.

For the proof of \eqref{Fourier:growth:1New:1} let use that  $\psi$ vanishes for $|x|>2.$ So, $\psi\mathcal{B}_\varkappa$ is of compact support of $G,$ and for all $r\in \mathbb{R},$
$$ (1+\mathcal{R})^{r/\nu}[\psi\mathcal{B}_\varkappa]\in L^1(G).  $$
Indeed, for any $r\in \mathbb{R},$ $(1+\mathcal{R})^{r/\nu}$ maps the Schwartz space $\mathscr{S}(G)$ into the Schwartz space $\mathscr{S}(G).$
So, for all $r\in \mathbb{R},$ $(1+\mathcal{R})^{r/\nu}[\psi\mathcal{B}_\varkappa]$ is the right-convolution kernel of a bounded operator on $L^2(G).$ Indeed, the Hausdorff-Young inequality gives
\begin{equation}
    \forall f\in L^2(G),\,\,\Vert f\ast (1+\mathcal{R})^{r/\nu}[\psi\mathcal{B}_\varkappa] \Vert_{L^2(G)}\leq \Vert f\Vert_{L^2(G)}\Vert (1+\mathcal{R})^{r/\nu}[\psi\mathcal{B}_\varkappa] \Vert_{L^1(G)}.
\end{equation}However, the Plancherel theorem indicates that for any $r\in \mathbb{R},$ 
 $$ \sup_{\pi\in \widehat{G}}\Vert \mathscr{F}[(1+\mathcal{R})^{r/\nu}[\psi\mathcal{B}_\varkappa]](\pi)\Vert_{\textnormal{op}}=\sup_{\pi\in \widehat{G}}\Vert (1+\pi(\mathcal{R}))^{r/\nu}\widehat{\psi\mathcal{B}}_\varkappa(\pi)\Vert_{\textnormal{op}}<\infty.  $$
In a similar way, we have that 
$$\forall r\in \mathbb{R},\, \sup_{\pi\in \widehat{G}}\Vert\widehat{\psi\mathcal{B}}_\varkappa(\pi) (1+\pi(\mathcal{R}))^{r/\nu}\Vert_{\textnormal{op}}<\infty.  $$
In consequence, for any $s>0,$
\begin{align*}
    &\sup_{\pi\in \widehat{G}}\Vert \widehat{K}_1(\pi) (1+\pi(\mathcal{R}))^{\frac{Q a}{2\nu}} \Vert_{\textnormal{op}} \\
    &= \sup_{\pi\in \widehat{G}}\Vert \widehat{\mathcal{B}_\varkappa\psi}(\pi)\widehat{K}(\pi) (1+\pi(\mathcal{R}))^{\frac{Q a}{2\nu}}\Vert_{\textnormal{op}}\\
    &= \sup_{\pi\in \widehat{G}}\Vert \widehat{\mathcal{B}_\varkappa\psi}(\pi)(1+\pi(\mathcal{R}))^{s/\nu}(1+\pi(\mathcal{R}))^{-s/\nu}\widehat{K}(\pi) (1+\pi(\mathcal{R}))^{\frac{Q a}{2\nu}}\Vert_{\textnormal{op}}\\
    &\leq \sup_{\pi\in \widehat{G}}\Vert \widehat{\mathcal{B}_\varkappa\psi}(\pi)(1+\pi(\mathcal{R}))^{s/\nu}\Vert_{\textnormal{op}} \Vert(1+\pi(\mathcal{R}))^{-s/\nu}\widehat{K}(\pi) (1+\pi(\mathcal{R}))^{\frac{Q a}{2\nu}}\Vert_{\textnormal{op}}\\
    &\lesssim_{s}  \Vert(1+\pi(\mathcal{R}))^{-s/\nu}\Vert_{\textnormal{op}}\Vert\widehat{K}(\pi) (1+\pi(\mathcal{R}))^{\frac{Q a}{2\nu}}\Vert_{\textnormal{op}}<\infty
\end{align*}which demonstrate  \eqref{Fourier:growth:1New:1}.
Now, we are going  to prove \eqref{GS:CZ:cond:22:2}. Define
$$ G_{\varkappa,k}:=\phi_k \mathcal{B}_\varkappa.  $$  Then, $$K\ast (\psi \mathcal{B}_\varkappa)=\sum_{k=0}^{\infty}K\ast G_{\varkappa,k}.$$
To do so, take $y\in G$ such that $|y|<\min\{b,\frac{1}{2}\}.$ For any $k\in \mathbb{N}_0,$ let
\begin{equation}
    I_k:=\smallint\limits_{|x|>2|y|^{1-a}}|K\ast G_{\varkappa,k}(y^{-1}x)-K\ast G_{\varkappa,k}(x)|dx.
\end{equation} Now, let us make an analysis of the last integral above when $t\in \textnormal{supp}(G_{\varkappa,k}).$  In that case $|2^{k}\cdot t|=2^{k}|t|\in (1/2,2)$ that is $2^{-k-1}<|t|<2^{-k+1}.$
Note that the changes of variables $z=xt^{-1}$ implies the inequalities
\begin{align*}
    I_k&=\smallint\limits_{|x|>2|y|^{1-a}}|K\ast G_{\varkappa,k}(y^{-1}x)-K\ast G_{\varkappa,k}(x)|dx\\
    &=\smallint\limits_{|x|>2|y|^{1-a}}|\smallint\limits_{G} (K(y^{-1}xt^{-1}) G_{\varkappa,k}(t)-K(xt^{-1}) G_{\varkappa,k}(t))dt|dx\\
    &\leq  \smallint\limits_{G} |G_{\varkappa,k}(t)| \smallint\limits_{|x|>2|y|^{1-a}} |K(y^{-1}xt^{-1}) -K(xt^{-1})|dxdt\\
    &\leq  \smallint\limits_{G} |G_{\varkappa,k}(t)| \smallint\limits_{|zt|>2|y|^{1-a}} |K(y^{-1}z)-K(z)|dzdt.
\end{align*}

So, we have proved the estimate
\begin{equation}\label{case1:deep:analysis}
  I_k\leq   \smallint\limits_{G}| G_{\varkappa,k}(t)| \smallint\limits_{|zt|>2|y|^{1-a}} |K(y^{-1}z)-K(z)|dz\,dt,
\end{equation}  where $|y|\leq b < 1.$ To continue, let us estimate the integral
$$\|G_{\varkappa,k}\|_{L^1(G)}=   \smallint\limits_{G} |G_{\varkappa,k}(t)|dt.  $$ First, observe that $\mathcal{B}_{\varkappa}$ is the right-convolution kernel of the pseudo-differential operator $(1+\mathcal{R})^{-\frac{\varkappa}{\nu}}\in \Psi^{-\varkappa}_{1,0}(G\times \widehat{G}).$ Note that $0<\varkappa<Q,$ which can be proved by observing that $$Q(\theta-\alpha)<2Q<Q^{2}(1-\theta)+2Q=Q(Q(1-\theta)+2)$$ implying that
 $\varkappa=Q(\theta-\alpha)/[Q(1-\theta)+2]<Q.$ So, $\mathcal{B}_{\varkappa}$ satisfies the estimate (see \cite[Theorem 5.4.1]{FischerRuzhanskyBook})
 $$  |\mathcal{B}_{\varkappa}(t)|\leq C_{\varkappa}|t|^{-(Q-\varkappa)},\,|t|\lesssim 1. $$
 In consequence the change of variable $u=2^{k}t$ has the effect in the Haar measure $du=2^{kQ}dt$ and then $dt=2^{-kQ}du$, implying the following estimates
 $$
    \smallint\limits_{G}| G_{\varkappa,k}(t)|dt=\smallint\limits_{G}| \mathcal{B}_{\varkappa}(t)\phi(2^{k}t)|dt\lesssim  \smallint\limits_{|2^{k}t|<2}| \mathcal{B}_{\varkappa}(t)\phi(2^{k}t)|dt\lesssim \smallint\limits_{|2^{k}t|<2} |t|^{-(Q-\varkappa)}\phi(2^{k}t)|dt$$ $$=\smallint\limits_{|u|<2} |2^{-k}u|^{-(Q-\varkappa)}\phi(u)|2^{-kQ}du=2^{kQ-k\varkappa-kQ}\smallint\limits_{|u|<2}|u|^{-(Q-\varkappa)}\phi(u)du$$
    $$\lesssim_{\phi}2^{-k\varkappa}.$$
    The analysis above shows the validity of the inequality
    \begin{align}\label{G:B:K}
         \smallint\limits_{G}| G_{\varkappa,k}(t)|dt\leq C_{\phi} 2^{-k\varkappa},
    \end{align} for some $C_{\phi}>0.$
In particular, as $0<1-a<1,$ we have that $|y|\leq |y|^{1-a}.$
Now, we will analyse \eqref{case1:deep:analysis} in three cases. Indeed, for any $k,$ we will analyse the situation when $r=2^{-k}$ is inside of the interval   $[0,|y|/2),$ or, in the interval  $[|y|/2, |y|^{\frac{1-\theta}{1-\alpha}})$  and finally, the case where $r=2^{-k}$ is inside of the set $(|y|^{\frac{1-\theta}{1-\alpha}},\infty).$ See Figure  
\ref{Fig2} below.
\begin{figure}[h]
\includegraphics[width=6cm]{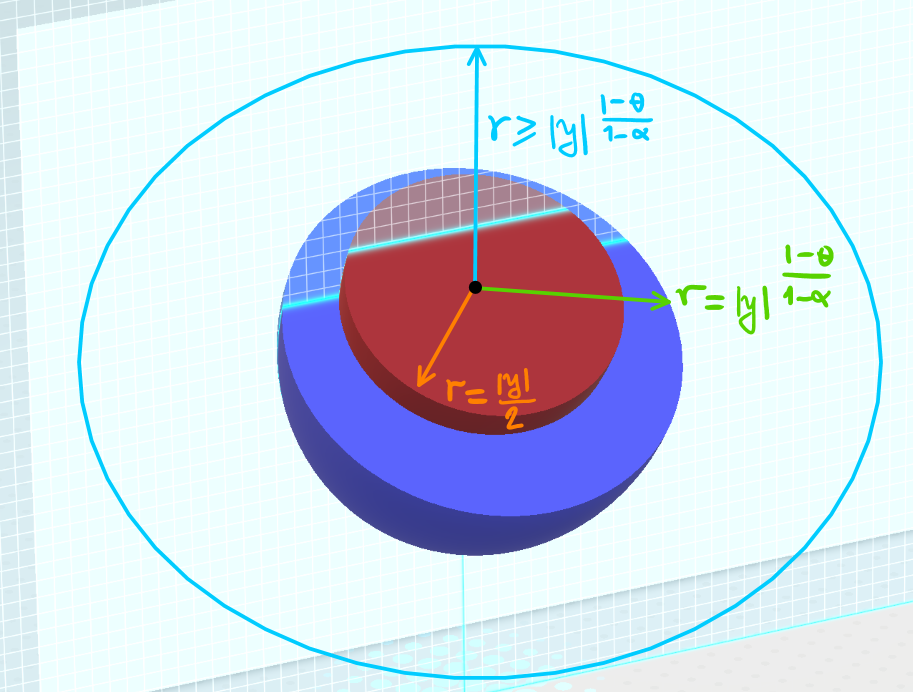}\\
\caption{}
 \label{Fig2}
\centering
\end{figure}
\begin{itemize}
    \item[Case 1:] $2^{-k}<|y|/2.$  In consequence, for the integral in \eqref{case1:deep:analysis}, the inequality $|zt|>2|y|^{1-a}$ implies that $|z|+|t|>2|y|^{1-a}$ and then
$$ |z|>2|y|^{1-a} -|t|>2|y|^{1-a}-2^{-k+1}. $$
The inequality $|y|^{1-a}-|y|\geq 0,$  and the fact that $2^{-k+1}<|y|$ imply that 
$$2|y|^{1-a}-2^{-k+1}>|y|^{1-a}+(|y|^{1-a}-|y|)\geq |y|^{1-a},    $$ and in this case $|z|>|y|^{1-a}.$ We have proved that 
\begin{equation}
    \{z\in G:\forall t\in \textnormal{supp}(G_{\varkappa,k}),|zt|>2|y|^{1-a}\,\}\subset \{z\in G:|z|>|y|^{1-a}\,\}.
\end{equation}So, we can estimate
\begin{align*}
     I_k&\leq  \smallint\limits_{G} |G_{\varkappa,k}(t)| \smallint\limits_{|zt|>2|y|^{1-a}} |K(y^{-1}z)-K(z)|dzdt\\
     &\leq  \smallint\limits_{G} |G_{\varkappa,k}(t)|dt \smallint\limits_{|z|>|y|^{1-a}} |K(y^{-1}z)-K(z)|dz\\
     &\lesssim_{\phi} 2^{-k\varkappa} \smallint\limits_{|z|>|y|^{1-a}} |K(y^{-1}z)-K(z)|dz.
\end{align*}
Let us consider a sequence of points $y_i,$ $0\leq i\leq m,$ $0<1/m<b,$ such that
\begin{equation*}
    y_0=e,\cdots, y_{m}=y, \,d(y_i,y_{i+1})<1/m, \,0\leq i\leq m-1.
\end{equation*}
\begin{itemize}
    \item {\bf{The topological algorithm for the choice of the $y_i$'s.}} For constructing this family of points, we consider the curve
\begin{equation}
    y(t):[0,m]\rightarrow G,\,y(t)=\frac{t}{m}\cdot y,
\end{equation} and the $y_i$'s will belong to its graph. Note that $y(0)=e,$ $y(m)= y,$ and that the derivative $y'(t)$ of the function $y(t)$ is the constant function
$$ y'(t)=\frac{1}{m}\cdot y.    $$

We illustrate the choice of the points $y_i$'s in Figure \ref{Fig1} below.
\begin{figure}[h]
\includegraphics[width=8cm]{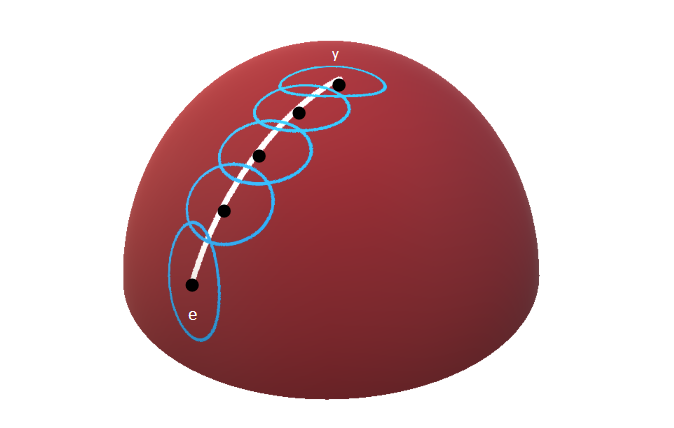}\\
\caption{}
 \label{Fig1}
\centering
\end{figure}
The topological algorithm to choose the points $y_i$ is as follows.  Observe that the length of the curve  $\ell$ is $\leq 1.$ Indeed, $$\ell:=\smallint\limits_{0}^{m}|y'(t)|dt=\smallint\limits_{0}^{m}|1/m\cdot y|dt\leq m(1/m)b\leq 1.$$ Note that we can cover the graph of $y(t)$ with $N_0$ balls $B_i=B(y_i,r_i)$ of radius $r_i=1/m,$ such that $y_0=e,$ $y_{i-i}\in B_{i} $ for $i\geq 2,$   $y_{m}=y,$ and $N_0\sim 2m.$ To guarantee that $d(y_i,y_{i+1})<1/m$ we can take

$$  z_{i+1}\in \partial B_{i}\cap \{y(t):0\leq t\leq m\} $$ and choose $y_{i+1}\in B_i$ such that $d(y_{i+1},z_{i+1})<\frac{1}{2^m}.$ This inductive process ends when one of the balls $B_{i}$ contains the point $y$ in its interior and the distance between $y$ and the center of ball is less than $1/m$.
\end{itemize}
Having fixed the sequence $y_i$  now let us choose a suitable $m.$ Indeed, consider $m\geq 2$ as  the least positive integer such that
$$  \frac{2}{m^{1-\theta}}<|y|^{1-a}-|y|<\frac{2}{(m-1)^{1-\theta}}. $$ 
Then we have that 
$$ |y|^{1-a}-|y|\sim  \frac{2}{m^{1-\theta}}= 2\times \left(\frac{1}{m}\right)^{1-\theta}\sim 2d(y_{i},y_{i+1})^{1-\theta}=2|y_{i}^{-1}y_{i+1}|^{1-\theta}, $$
for all $0\leq i\leq m-1.$ The previous analysis and the changes of variables $x=y_{i-1}^{-1}z$ implies that
$$ I_k\leq  \smallint\limits_{G} |G_{\varkappa,k}(t)|dt \smallint\limits_{|z|>|y|^{1-a}} |K(y^{-1}z)-K(z)|dz $$
$$\lesssim_{\phi}2^{-k\varkappa}\sum_{i=1}^{m}\smallint\limits_{|z|>|y|^{1-a}} |K\left(y_{i}^{-1}z\right)-K\left(y_{i-1}^{-1}z\right)|dz= 2^{-k\varkappa}\sum_{i=1}^m \smallint\limits_{|y_{i-1}\cdot x|>|y|^{1-a}}|K(y_{i}^{-1}y_{i-1}x)-K(x)|dx $$
\begin{align*}
      &\lesssim 2^{-k\varkappa}\sum_{i=1}^m \smallint\limits_{|x|>2|y_{i-1}^{-1}y_{i}|^{1-\theta}}|K(y_{i}^{-1}y_{i-1}x)-K(x)|dx \\ &=2^{-k\varkappa}\sum_{i=1}^m \smallint\limits_{|x|>2|y_{i-1}^{-1}y_{i}|^{1-\theta}}|K((y_{i-1}^{-1}y_{i})^{-1}x)-K(x)|dx \\
      & \lesssim 2^{-k\varkappa}\sum_{i=1}^m[K]_{H_{\infty,\theta,b}}'=2^{-k\varkappa}m[K]_{H_{\infty,\theta,b}}'.
\end{align*}
Indeed, in the previous inequality we have used  the estimate
$$  \smallint\limits_{|y_{i-1}\cdot x|>|y|^{1-a}}|K(y_{i}^{-1}y_{i-1}x)-K(x)|dx\lesssim \smallint\limits_{|x|>2|y_{i-1}^{-1}y_{i}|^{1-\theta} }|K(y_{i}^{-1}y_{i-1}x)-K(x)|dx.   $$ Indeed, estimating $|y_{i-1}|\sim |y|(i-1)/m<|y|,$ we have that the estimate $|y_{i-1}x|\geq |y|^{1-a} $ implies that
$$  |x|>|y|^{1-a}-|y_{i-1}|\succeq |y|^{1-a}-|y|\succeq 2|y_{i-1}^{-1}y_{i}|^{1-\theta}. $$
The choice of $m$ implies that $d(y_i,y_{i+1})\sim \frac{|y|}{m}$ and then 
$$ d(y_i,y_{i+1})^{1-\theta}\sim\left(\frac{|y|}{m}\right)^{1-\theta}\sim |y|^{1-a}-|y|. $$ Then $1/m\sim (|y|^{1-a})^{\frac{1}{1-\theta}}/|y|.$ We then can estimate $m\sim |y|^{1-\frac{1-a}{1-\theta}}.$ So, to finish our analysis in Case 1, note that $|y|^{-1}\lesssim 2^{k}$ which implies that
\begin{align}
    \sum_{k:2^{-k}<|y|/2}I_{k}\lesssim \sum_{k:2^{-k}<|y|/2} 2^{-k\varkappa}m \lesssim |y|^{1-\frac{1-a}{1-\theta}}\sum_{k:2^{-k}<|y|/2}2^{-k\varkappa}\sim |y|^{1-\frac{1-a}{1-\theta}}|y|^{\varkappa}.  
\end{align} Since $\varkappa+1-\frac{1-a}{1-\theta}=0,$ we have that   $ \sum_{k:2^{-k}<|y|/2}I_{k}\lesssim 1.$
    
\item[Case 2:] $|y|/2\leq 2^{-k}<|y|^{\frac{1-\theta}{1-\alpha}}.$ Define $$\delta_k:=5|y|^{2(1-\theta)/\lambda}2^{-kQ(1-\alpha)(1-\theta)/\lambda},$$ where 
$$ \lambda:=Q(1-\theta)+2.  $$
Then we have the upper and the lower bound
$$   5\times {2^{-k}}<\delta_k<5|y|^{1-\theta}.$$
Split $I_k$ as follows,
\begin{align*}
     I_k:=\smallint\limits_{|x|>2|y|^{1-a}}|K\ast G_{\varkappa,k}(y^{-1}x)-K\ast G_{\varkappa,k}(x)|dx=J_{1,k}+J_{2,k},
\end{align*} where
\begin{equation}
    J_{1,k}=\smallint\limits_{\{|x|>2|y|^{1-a} \}\cap \{x:|x|\leq \delta_k\} }|K\ast G_{\varkappa,k}(y^{-1}x)-K\ast G_{\varkappa,k}(x)|dx
\end{equation}and
\begin{equation}
    J_{2,k}=\smallint\limits_{\{|x|>2|y|^{1-a} \}\cap \{x:|x|> \delta_k\} }|K\ast G_{\varkappa,k}(y^{-1}x)-K\ast G_{\varkappa,k}(x)|dx.
\end{equation}Now, let us estimate $J_{2,k}.$ Indeed, the change of variable $z=xt^{-1},$ for $t\in \textnormal{supp}(G_{\varkappa,k})$ implies

\begin{align*}
    J_{2,k}&=\smallint\limits_{ \{|x|>2|y|^{1-a} \}\cap \{x:|x|> \delta_k\}  }|K\ast G_{\varkappa,k}(y^{-1}x)-K\ast G_{\varkappa,k}(x)|dx\\
    &=\smallint\limits_{ \{|x|>2|y|^{1-a} \}\cap \{x:|x|> \delta_k\}  }|\smallint\limits_{G} (K(y^{-1}xt^{-1}) G_{\varkappa,k}(t)-K(xt^{-1}) G_{\varkappa,k}(t))dt|dx\\
    &\leq  \smallint\limits_{G} |G_{\varkappa,k}(t)| \smallint\limits_{ \{|x|>2|y|^{1-a} \}\cap \{x:|x|> \delta_k\}  } |K(y^{-1}xt^{-1}) -K(xt^{-1})|dxdt\\
    &\leq  \smallint\limits_{G} |G_{\varkappa,k}(t)| \smallint\limits_{\{|zt|>2|y|^{1-a} \}\cap \{z:|zt|> \delta_k\}} |K(y^{-1}z)-K(z)|dzdt\\
    &\leq   \smallint\limits_{G} |G_{\varkappa,k}(t)| \smallint\limits_{ \{z:|zt|> \delta_k\}} |K(y^{-1}z)-K(z)|dz dt.
\end{align*}   Note that when  $|zt|>\delta_{k},$ we have $|t|+|z|\geq|zt|>\delta_{k}$ and with $t\in \textnormal{supp}(G_{\varkappa,k}),$ $|t|<2^{-k+1}$ from which ones deduce the inclusion of sets
$$ \{z:|zt|> \delta_k \}\subset\{z: |z|>\delta_k-2^{-k+1}\}, $$
and the estimate
$$  \smallint\limits_{ \{z:|zt|> \delta_k\}} |K(y^{-1}z)-K(z)|dz\leq  \smallint\limits_{ \{z: |z|>\delta_k-2^{-k+1}\} } |K(y^{-1}z)-K(z)|dz.  $$
So, the previous analysis together with \eqref{G:B:K} gives
\begin{equation}
    J_{2,k}\lesssim 2^{-k\varkappa} \smallint\limits_{ \{z:|z|> \delta_k-2^{-k+1}\}} |K(y^{-1}z)-K(z)|dz.
\end{equation}
To continue, let us make use of the argument illustrated in Figure \ref{Fig1}. Using this construction we consider a  sequence of points  $y_i,$ $0\leq i\leq m,$ $0<1/m<b,$ such that
\begin{equation*}
    y_0=e,\cdots, y_{m}=y, \,d(y_i,y_{i+1})\sim |y|/m, \,0\leq i\leq m-1,
\end{equation*}on the curve $y(t)=\frac{t}{m}\cdot y,$ $t\in [0,m],$ and we consider again the topological construction done in Case 1 in order to obtain the required family of points $y_i.$ 
From now, assume that $m$ is the least integer such that
$$ 2d(y_{i},y_{i+1})^{1-\theta}\sim  2(|y|/m)^{1-\theta}<\delta_k-2^{-k+2}. $$
The  changes of variables $x=y_{i-1}^{-1}z$ in any term of the sums below implies that
$$ J_{2,k}\lesssim  2^{-k\varkappa} \smallint\limits_{ \{z:|z|> \delta_k-2^{-k+1}\}} |K(y^{-1}z)-K(z)|dz $$
$$\lesssim_{\phi}2^{-k\varkappa}\sum_{i=1}^{m}\smallint\limits_{ \{z:|z|> \delta_k-2^{-k+1}\}   } |K\left(y_{i}^{-1}z\right)-K\left(y_{i-1}^{-1}z\right)|dz$$
$$= 2^{-k\varkappa}\sum_{i=1}^m \smallint\limits_{ \{x:|y_{i-1}\cdot x|> \delta_k-2^{-k+1}\} }|K(y_{i}^{-1}y_{i-1}x)-K(x)|dx .$$
Note that for $|y_{i-1}\cdot x|> \delta_k-2^{-k+1},$ $|y|+|x|>  \delta_k-2^{-k+1}$ and then, the hypothesis $|y|/2<2^{-k}$
$$ |x|>  \delta_k-2^{-k+1}-|y|> \delta_k-2^{-k+1}-2^{-k+1}=\delta_k-2^{-k+2} \succeq 2d(y_{i},y_{i+1})^{1-\theta}  $$
from which we have proved that $$\smallint\limits_{ \{x:|y_{i-1}\cdot x|> \delta_k-2^{-k+1}\} }|K(y_{i}^{-1}y_{i-1}x)-K(x)|dx\lesssim \smallint\limits_{|x|>2|y_{i-1}^{-1}y_{i}|^{1-\theta}}|K(y_{i}^{-1}y_{i-1}x)-K(x)|dx.$$ In consequence,
\begin{align*}
     J_{2,k}\lesssim & 2^{-k\varkappa}\sum_{i=1}^m \smallint\limits_{|x|>2|y_{i-1}^{-1}y_{i}|^{1-\theta}}|K(y_{i}^{-1}y_{i-1}x)-K(x)|dx \\ &=2^{-k\varkappa}\sum_{i=1}^m \smallint\limits_{|z|>2|y_{i-1}^{-1}y_{i}|^{1-\theta}}|K((y_{i-1}^{-1}y_{i})^{-1}x)-K(x)|dx \\
      & \lesssim 2^{-k\varkappa}\sum_{i=1}^m[K]_{H_{\infty,\theta,b}}'=2^{-k\varkappa}m[K]_{H_{\infty,\theta,b}}'.
\end{align*}It follows that  $m\lesssim |y|^{Q(1-\theta)/\lambda}2^{kQ(1-\theta)/\lambda},$ and in this Case 2,
$$ J_{2,k}\lesssim 2^{-k\varkappa}|y|^{Q(1-\theta)/\lambda}2^{kQ(1-\theta)/\lambda}, $$
 where 
$$ \lambda:=Q(1-\theta)+2.  $$
Now, let us estimate $J_{1,k}.$ In view of the Schwarz inequality we have the estimate:
\begin{align*}
    J_{1,k}&\leq 2\smallint_{|x|\leq \delta_k}|K\ast G_{\varkappa,k}(x)|dx\lesssim \delta_{k}^{\frac{Q}{2}}\|K\ast G_{\varkappa,k}\|_{L^2(G)}=\delta_{k}^{\frac{Q}{2}}\Vert  \widehat{G}_{\varkappa,k}\widehat{K}\Vert_{L^2(\widehat{G})}\\
    &\leq \delta_{k}^{\frac{Q}{2}}\Vert  \widehat{G}_{\varkappa,k} \phi((2^{-k}\cdot \pi)(\mathcal{R})) \widehat{K}\Vert_{L^2(\widehat{G})}
   + \delta_{k}^{\frac{Q}{2}} \Vert  \widehat{G}_{\varkappa,k} (1-\phi)((2^{-k}\cdot \pi)(\mathcal{R})) \widehat{K}\Vert_{L^2(\widehat{G})},
\end{align*} with $\phi$ as in \eqref{the:auxiliar:phi}. Since,
\begin{align*}
    \Vert  \widehat{G}_{\varkappa,k} \phi((2^{-k}\cdot \pi)(\mathcal{R})) \widehat{K}\Vert_{L^2(\widehat{G})}^2 &=\smallint\limits_{\widehat{G}}\| \widehat{G}_{\varkappa,k}(\pi) \phi((2^{-k}\cdot \pi)(\mathcal{R})) \widehat{K}(\pi)\|^2_{\textnormal{HS}}d\pi\\
    &\leq \Vert \widehat{G}_{\varkappa,k}\Vert_{L^\infty(\widehat{G})}^2\smallint\limits_{\widehat{G}}\| \phi((2^{-k}\cdot \pi)(\mathcal{R})) \widehat{K}(\pi)\|^2_{\textnormal{HS}}d\pi.
\end{align*}Using \eqref{G:B:K} we have that $\Vert \widehat{G}_{\varkappa,k}\Vert_{L^\infty(\widehat{G})}^2\leq \Vert {G}_{\varkappa,k} \Vert^2_{L^1(\widehat{G})}\lesssim 2^{-2k\varkappa}$ and then 

$$  \Vert  \widehat{G}_{\varkappa,k} \phi((2^{-k}\cdot \pi)(\mathcal{R})) \widehat{K}\Vert_{L^2(\widehat{G})}^2\lesssim 2^{-2k\varkappa} \| \phi((2^{-k}\cdot \pi)(\mathcal{R})) \widehat{K}(\pi)\|_{L^2(\widehat{G})}.  $$
Using  \eqref{A:alpha}, that is,
\begin{equation}
        \sup_{\pi\in \widehat{G}}\Vert (1+\pi(\mathcal{R}))^{\frac{Q\alpha}{2\nu}}\widehat{K}(\pi) \Vert_{\textnormal{op}}<\infty,
\end{equation}
we have that
$$
   \| \phi((2^{-k}\cdot \pi)(\mathcal{R})) \widehat{K}(\pi)\|_{L^2(\widehat{G})} =\| \phi((2^{-k}\cdot \pi)(\mathcal{R}))(1+\pi(\mathcal{R}))^{-\frac{Q\alpha}{2\nu}} (1+\pi(\mathcal{R}))^{\frac{Q\alpha}{2\nu}}\widehat{K}(\pi)\|_{L^2(\widehat{G})}$$
   $$\leq  \sup_{\pi\in \widehat{G}}\Vert (1+\pi(\mathcal{R}))^{\frac{Q\alpha}{2\nu}}\widehat{K}(\pi) \Vert_{\textnormal{op}}\times  \| \phi((2^{-k}\cdot \pi)(\mathcal{R}))(1+\pi(\mathcal{R}))^{-\frac{Q\alpha}{2\nu}}\|_{L^2(\widehat{G})}$$
   $$\lesssim \| \phi((2^{-k}\cdot \pi)(\mathcal{R}))(1+\pi(\mathcal{R}))^{-\frac{Q\alpha}{2\nu}}\|_{L^2(\widehat{G})}$$
   $$\lesssim \| \phi((2^{-k}\cdot \pi)(\mathcal{R}))\pi(\mathcal{R})^{-\frac{Q\alpha}{2\nu}}\|_{L^2(\widehat{G})}$$
   $$= \| \pi(\mathcal{R})^{-\frac{Q\alpha}{2\nu}} \phi((2^{-k}\cdot \pi)(\mathcal{R}))\|_{L^2(\widehat{G})}.$$
Note that  in the last line we have used the commutativity identity  
$$ \phi((2^{-k}\cdot \pi)(\mathcal{R}))\pi(\mathcal{R})^{-\frac{Q\alpha}{2\nu}}= \pi(\mathcal{R})^{-\frac{Q\alpha}{2\nu}} \phi((2^{-k}\cdot \pi)(\mathcal{R})) $$ in view of the functional calculus of $\mathcal{R},$  and the estimate
\begin{equation}\label{Quotient:L2:act}
     \| \phi((2^{-k}\cdot \pi)(\mathcal{R}))(1+\pi(\mathcal{R}))^{-\frac{Q\alpha}{2\nu}}\|_{L^2(\widehat{G})}
   \lesssim \| \phi((2^{-k}\cdot \pi)(\mathcal{R}))\pi(\mathcal{R})^{-\frac{Q\alpha}{2\nu}}\|_{L^2(\widehat{G})}.
\end{equation}Indeed,
\begin{align*}
    & \| \phi((2^{-k}\cdot \pi)(\mathcal{R}))(1+\pi(\mathcal{R}))^{-\frac{Q\alpha}{2\nu}}\|_{L^2(\widehat{G})}\\
    & = \| \phi((2^{-k}\cdot \pi)(\mathcal{R}))\pi(\mathcal{R})^{-\frac{Q\alpha}{2\nu}}\pi(\mathcal{R})^{\frac{Q\alpha}{2\nu}}(1+\pi(\mathcal{R}))^{-\frac{Q\alpha}{2\nu}}\|_{L^2(\widehat{G})}\\
     &\leq \sup_{\pi\in \widehat{G}} \Vert \pi(\mathcal{R})^{\frac{Q\alpha}{2\nu}}(1+\pi(\mathcal{R}))^{-\frac{Q\alpha}{2\nu}}\Vert_{\textnormal{op}} \| \phi((2^{-k}\cdot \pi)(\mathcal{R}))\pi(\mathcal{R})^{-\frac{Q\alpha}{2\nu}}\|_{L^2(\widehat{G})}\\
     &\lesssim \| \phi((2^{-k}\cdot \pi)(\mathcal{R}))\pi(\mathcal{R})^{-\frac{Q\alpha}{2\nu}}\|_{L^2(\widehat{G})}.
\end{align*} Note that we have used the fact that, in view of the $L^2(G)$-boundedness of the operator $ \mathcal{R}^{\frac{Q\alpha}{2\nu}}(1+\mathcal{R})^{-\frac{Q\alpha}{2\nu}},$ the sup
\begin{equation}\label{sup:quotient}
    \sup_{\pi\in \widehat{G}} \Vert \pi(\mathcal{R})^{\frac{Q\alpha}{2\nu}}(1+\pi(\mathcal{R}))^{-\frac{Q\alpha}{2\nu}}\Vert_{\textnormal{op}}<\infty,
\end{equation}
is finite.  On the other hand, using the Plancherel theorem we get 
\begin{align}
    \| \pi(\mathcal{R})^{-\frac{Q\alpha}{2\nu}} \phi((2^{-k}\cdot \pi)(\mathcal{R})) \|_{L^2(\widehat{G})}=\Vert \mathcal{R}^{-\frac{Q\alpha}{2\nu}}\mathscr{F}_{G}^{-1}[\phi((2^{-k}\cdot \pi)(\mathcal{R}))]  \Vert_{L^2(G)}.
\end{align}With $r=2^{-k},$ and $\Phi_{r}=r^{-Q}\phi(\mathcal{R})\delta(r^{-1}\cdot),$ $\widehat{\Phi}_r(\pi)=\widehat{\Phi}_1(r\cdot \pi).$ In consequence
$$ \phi((2^{-k}\cdot \pi)(\mathcal{R}))=\phi((r\cdot \pi)(\mathcal{R}))= \widehat{\phi(\mathcal{R}\delta)}(r\cdot \pi)= \widehat{\Phi}_1(r\cdot \pi) $$
and 
\begin{align*}
    \mathcal{R}^{-\frac{Q\alpha}{2\nu}}\mathscr{F}_{G}^{-1}[\phi((2^{-k}\cdot \pi)(\mathcal{R}))]=\mathcal{R}^{-\frac{Q\alpha}{2\nu}}\mathscr{F}_{G}^{-1}[\phi((r\cdot \pi)(\mathcal{R}))]=\mathcal{R}^{-\frac{Q\alpha}{2\nu}}\mathscr{F}_{G}^{-1}[\widehat{\Phi}_r(\pi)]
\end{align*}
$$ =\mathcal{R}^{-\frac{Q\alpha}{2\nu}}\Phi_r.  $$
As $0<Q\alpha/2<Q,$ in view of Corollary 4.3.11 of \cite{FischerRuzhanskyBook}, the right-convolution kernel of $\mathcal{R}^{-\frac{Q\alpha}{2\nu}}$ is homogeneous  of order $\frac{Q\alpha}{2}-Q,$ and in consequence of \cite[Lemma 3.2.7]{FischerRuzhanskyBook} $\mathcal{R}^{-\frac{Q\alpha}{2\nu}}$ has homogeneous degree equal to $-Q\alpha/2.$ So, we have that 
\begin{align*}
    \Vert \mathcal{R}^{-\frac{Q\alpha}{2\nu}}\mathscr{F}_{G}^{-1}[\phi((2^{-k}\cdot \pi)(\mathcal{R}))]  \Vert_{L^2(G)}&=\Vert \mathcal{R}^{-\frac{Q\alpha}{2\nu}}\Phi_r  \Vert_{L^2(G)}=r^{-Q}\Vert \mathcal{R}^{-\frac{Q\alpha}{2\nu}}[\phi(\mathcal{R})\delta(r^{-1}\cdot)]  \Vert_{L^2(G)}\\
    &=r^{-Q} r^{\frac{Q\alpha}{2}}\Vert \mathcal{R}^{-\frac{Q\alpha}{2\nu}}[\phi(\mathcal{R})\delta](r^{-1}\cdot) \Vert_{L^2(G)}\\
    &=r^{-Q} r^{\frac{Q\alpha}{2}}r^{\frac{Q}{2}}\Vert \mathcal{R}^{-\frac{Q\alpha}{2\nu}}[\phi(\mathcal{R})\delta](\cdot) \Vert_{L^2(G)}\\
    &=2^{-k(\frac{Q\alpha}{2}-\frac{Q}{2})}\Vert \mathcal{R}^{-\frac{Q\alpha}{2\nu}}[\phi(\mathcal{R})\delta](\cdot) \Vert_{L^2(G)}.
\end{align*}In view of the Hulanicki theorem in \cite{FischerRuzhanskyBook}, $\phi(\mathcal{R})\delta\in \mathscr{S}(G)$ and then $$ \Vert \mathcal{R}^{-\frac{Q\alpha}{2\nu}}[\phi(\mathcal{R})\delta](\cdot) \Vert_{L^2(G)}<\infty,$$ in view of Corollary 4.3.11 in \cite{FischerRuzhanskyBook}. All the analysis above implies that
\begin{align*}
    J_{1,k}
    &\leq \delta_{k}^{\frac{Q}{2}}\Vert  \widehat{G}_{\varkappa,k} \phi((2^{-k}\cdot \pi)(\mathcal{R})) \widehat{K}\Vert_{L^2(\widehat{G})}
   + \delta_{k}^{\frac{Q}{2}} \Vert  \widehat{G}_{\varkappa,k} (1-\phi)((2^{-k}\cdot \pi)(\mathcal{R})) \widehat{K}\Vert_{L^2(\widehat{G})}\\
   &\lesssim \delta_{k}^{\frac{Q}{2}}2^{-k\varkappa}2^{-k(\frac{Q\alpha}{2}-\frac{Q}{2})}+ \delta_{k}^{\frac{Q}{2}} \Vert  \widehat{G}_{\varkappa,k} (1-\phi)((2^{-k}\cdot \pi)(\mathcal{R})) \widehat{K}\Vert_{L^2(\widehat{G})}\\
   &\lesssim \delta_{k}^{\frac{Q}{2}}2^{-k(\varkappa+\frac{Q(\alpha-1)}{2})}+ \delta_{k}^{\frac{Q}{2}} \Vert  \widehat{G}_{\varkappa,k} (1-\phi)((2^{-k}\cdot \pi)(\mathcal{R})) \widehat{K}\Vert_{L^2(\widehat{G})}\\
   &=\delta_{k}^{\frac{Q}{2}}2^{-\frac{kQ(a-1)}{2}}+ \delta_{k}^{\frac{Q}{2}} \Vert  \widehat{G}_{\varkappa,k} (1-\phi)((2^{-k}\cdot \pi)(\mathcal{R})) \widehat{K}\Vert_{L^2(\widehat{G})}.
\end{align*}Now, we will prove the estimate
\begin{align}\label{Aux:again:jik}
    \Vert  \widehat{G}_{\varkappa,k} (1-\phi)((2^{-k}\cdot \pi)(\mathcal{R})) \widehat{K}\Vert_{L^2(\widehat{G})}\lesssim 2^{-\frac{kQ(a-1)}{2}},
\end{align} in order to have the following upper bound for $J_{1,k},$
\begin{equation}\label{to:proof:jik}
   J_{1,k} \lesssim\delta_{k}^{\frac{Q}{2}}2^{-k(\varkappa+\frac{Q(\alpha-1)}{2})}=\delta_{k}^{\frac{Q}{2}}2^{-\frac{kQ(1-a)}{2})}.
\end{equation}For the proof of \eqref{Aux:again:jik} note that
\begin{align*}
     &\Vert  \widehat{G}_{\varkappa,k} (1-\phi)((2^{-k}\cdot \pi)(\mathcal{R})) \widehat{K}\Vert_{L^2(\widehat{G})}\\
     &= \Vert  \widehat{G}_{\varkappa,k} (1-\phi)((2^{-k}\cdot \pi)(\mathcal{R}))(1+\pi(\mathcal{R}))^{-\frac{Q\alpha}{2\nu}}(1+\pi(\mathcal{R}))^{\frac{Q\alpha}{2\nu}} \widehat{K}\Vert_{L^2(\widehat{G})}\\
     &\leq  \sup_{\pi\in \widehat{G}}\Vert (1+\pi(\mathcal{R}))^{\frac{Q\alpha}{2\nu}} \widehat{K}(\pi)\Vert_{\textnormal{op}} 
     \Vert  \widehat{G}_{\varkappa,k} (1-\phi)((2^{-k}\cdot \pi)(\mathcal{R}))(1+\pi(\mathcal{R}))^{-\frac{Q\alpha}{2\nu}}\Vert_{L^2(\widehat{G})}\\
     &\lesssim 
     \Vert  \widehat{G}_{\varkappa,k} (1-\phi)((2^{-k}\cdot \pi)(\mathcal{R}))(1+\pi(\mathcal{R}))^{-\frac{Q\alpha}{2\nu}}\Vert_{L^2(\widehat{G})}.
\end{align*}Using again the estimate in \eqref{sup:quotient} we have that
\begin{align*}
    & \Vert  \widehat{G}_{\varkappa,k} (1-\phi)((2^{-k}\cdot \pi)(\mathcal{R}))(1+\pi(\mathcal{R}))^{-\frac{Q\alpha}{2\nu}}\Vert_{L^2(\widehat{G})}\\
    &=\Vert  \widehat{G}_{\varkappa,k} (1-\phi)((2^{-k}\cdot \pi)(\mathcal{R})) \pi(\mathcal{R})^{-\frac{Q\alpha}{2\nu}} \pi(\mathcal{R})^{\frac{Q\alpha}{2\nu}}  (1+\pi(\mathcal{R}))^{-\frac{Q\alpha}{2\nu}}\Vert_{L^2(\widehat{G})}\\
     &\lesssim \Vert  \widehat{G}_{\varkappa,k} (1-\phi)((2^{-k}\cdot \pi)(\mathcal{R}))\pi(\mathcal{R})^{-\frac{Q\alpha}{2\nu}}\Vert_{L^2(\widehat{G})}.
\end{align*}Now, let us use the functional calculus of $\mathcal{R}.$ For any continuous function $\kappa(t)$ on $\mathbb{R}^+$ one has that
\begin{equation}
 \forall r>0,   \kappa(r^{\nu}\mathcal{R})\delta=r^{-Q}[ \kappa(\mathcal{R})\delta](r^{-1}\cdot).
\end{equation}Taking in both sides the group Fourier transform one has
\begin{align*}
     \kappa(r^{\nu}\pi(\mathcal{R}))=  \kappa((r\cdot \pi)(\mathcal{R})).
\end{align*}The previous identity with $\kappa(t)=t^{-\frac{Q}{2\nu}}$ gives
\begin{equation}
\forall r>0,     (r^{\nu}\pi(\mathcal{R}))^{-\frac{Q\alpha}{2\nu}}=((r\cdot \pi)(\mathcal{R}) )^{-\frac{Q\alpha}{2\nu}}.
\end{equation}Using the previous property, and the changes of variables $\pi'=2^{-k}\cdot \pi,$ we have the effect in the Borel measure $d\pi'=2^{-kQ}d\pi$ on the unitary dual $\widehat{G}$ and we can estimate 
\begin{align*}
  & \Vert  \widehat{G}_{\varkappa,k} (1-\phi)((2^{-k}\cdot \pi)(\mathcal{R}))\pi(\mathcal{R})^{-\frac{Q\alpha}{2\nu}}\Vert_{L^2(\widehat{G})}^2\\
  &=\smallint\limits_{\widehat{G}}\Vert \widehat{G}_{\varkappa,k}(\pi) (1-\phi)((2^{-k}\cdot \pi)(\mathcal{R}))\pi(\mathcal{R})^{-\frac{Q\alpha}{2\nu}}  \Vert_{\textnormal{HS}}^2d\pi\\
   &=\smallint\limits_{\widehat{G}}\Vert \widehat{G}_{\varkappa,k}(2^{k}\cdot \pi') (1-\phi)(\pi'(\mathcal{R}))((2^k\cdot\pi')(\mathcal{R}))^{-\frac{Q\alpha}{2\nu}}  \Vert_{\textnormal{HS}}^22^{kQ}d\pi'\\
   &=\smallint\limits_{\widehat{G}}\Vert \widehat{G}_{\varkappa,k}(2^{k}\cdot \pi') (1-\phi)(\pi'(\mathcal{R}))(2^{k\nu}\pi(\mathcal{R}))^{-\frac{Q\alpha}{2\nu}}  \Vert_{\textnormal{HS}}^22^{kQ}d\pi'\\
   &=\smallint\limits_{\widehat{G}}\Vert \widehat{G}_{\varkappa,k}(2^{k}\cdot \pi') (1-\phi)(\pi'(\mathcal{R}))(\pi'(\mathcal{R}))^{-\frac{Q\alpha}{2\nu}}  \Vert_{\textnormal{HS}}^22^{k(Q-Q\alpha)}d\pi'\\
  & \lesssim\smallint\limits_{\widehat{G}}\Vert \widehat{G}_{\varkappa,k}(2^{k}\cdot \pi') (1-\phi)(\pi'(\mathcal{R}))(1+\pi'(\mathcal{R}))^{-\frac{Q\alpha}{2\nu}}  \Vert_{\textnormal{HS}}^22^{k(Q-Q\alpha)}d\pi'.
\end{align*}
Then, we have estimated
\begin{align*}
    & \Vert  \widehat{G}_{\varkappa,k} (1-\phi)((2^{-k}\cdot \pi)(\mathcal{R}))\pi(\mathcal{R})^{-\frac{Q\alpha}{2\nu}}\Vert_{L^2(\widehat{G})}^2\\ &\lesssim\smallint\limits_{\widehat{G}}\Vert \widehat{G}_{\varkappa,k}(2^{k}\cdot \pi') (1-\phi)(\pi'(\mathcal{R}))(1+\pi'(\mathcal{R}))^{-\frac{Q\alpha}{2\nu}}  \Vert_{\textnormal{HS}}^22^{k(Q-Q\alpha)}d\pi'.
\end{align*}
Now, let us use the identity
\begin{align*}
    (1-\phi)=(1-\phi)^2+\phi(1-\phi).
\end{align*}We have that
\begin{align*}
   & \Vert \widehat{G}_{\varkappa,k}(2^{k}\cdot \pi') (1-\phi)(\pi'(\mathcal{R}))(1+\pi'(\mathcal{R}))^{-\frac{Q\alpha}{2\nu}}  \Vert_{L^2(\widehat{G})}\\
    &\leq \Vert \widehat{G}_{\varkappa,k}(2^{k}\cdot \pi') (1-\phi)^2(\pi'(\mathcal{R}))(1+\pi'(\mathcal{R}))^{-\frac{Q\alpha}{2\nu}}  \Vert_{L^2(\widehat{G})}\\
    &\hspace{3cm}+\Vert \widehat{G}_{\varkappa,k}(2^{k}\cdot \pi') \phi(1-\phi)(\pi'(\mathcal{R}))(1+\pi'(\mathcal{R}))^{-\frac{Q\alpha}{2\nu}}  \Vert_{L^2(\widehat{G})}=R_{1}+R_{2}.
\end{align*}
Let us estimate $R_2,$ that is the last term of the previous inequality.
\begin{align*}
    R_2 &=\Vert \widehat{G}_{\varkappa,k}(2^{k}\cdot \pi') \phi(1-\phi)(\pi'(\mathcal{R}))(1+\pi'(\mathcal{R}))^{-\frac{Q\alpha}{2\nu}}  \Vert_{L^2(\widehat{G})}\\
    &\lesssim\Vert \widehat{G}_{\varkappa,k}\Vert_{L^\infty(\widehat{G})}\Vert (1+\pi'(\mathcal{R}))^{-\frac{Q\alpha}{2\nu}}   \phi(1-\phi)(\pi'(\mathcal{R})) \Vert_{L^2(\widehat{G})}\\
    &\lesssim \Vert {G}_{\varkappa,k}\Vert_{L^{1}(G)} \Vert (1+\mathcal{R})^{-\frac{Q\alpha}{2\nu}}[ [\phi(1-\phi)](\mathcal{R})\delta \Vert_{L^2(G)}.
\end{align*}In view of the Hulanicki theorem in \cite{FischerRuzhanskyBook}, we have that $[\phi(1-\phi)](\mathcal{R})\delta \in \mathscr{S}(G),$ and 
\begin{align*}
     \Vert (1+\mathcal{R})^{-\frac{Q\alpha}{2\nu}}[ [\phi(1-\phi)](\mathcal{R})\delta \Vert_{L^2(G)}=\Vert \phi(1-\phi)](\mathcal{R})\delta \Vert_{L^2_{ -\frac{Q\alpha}{2}}(G)}<\infty.
\end{align*}So, we have proved that
$$ R_2\lesssim \Vert {G}_{\varkappa,k}\Vert_{L^{1}(G)}\lesssim2^{-k\varkappa}.  $$

Now, let $N=n_{0}\nu>Q/2,$ where $n_0\in \mathbb{N}.$ Let us consider and let $\mathcal{B}_N$ be the Bessel potential defined by $\widehat{\mathcal{B}}_N(\pi)=(1+\pi(\mathcal{R}))^{\frac{N}{\nu}}.$ We can estimate
\begin{align*}
   & R_1= \smallint\limits_{\widehat{G}}\Vert \widehat{G}_{\varkappa,k}(2^{k}\cdot \pi') (1-\phi)^2(\pi'(\mathcal{R}))  (1+\pi'(\mathcal{R}))^{-\frac{Q\alpha}{2\nu}} \|_{\textnormal{HS}}^2d\pi'\\
   &\leq \smallint\limits_{\widehat{G}}\Vert \widehat{G}_{\varkappa,k}(2^{k}\cdot \pi') (1-\phi)^2(\pi'(\mathcal{R}))   \|_{\textnormal{HS}}^2d\pi'\\
    &=\smallint\limits_{\widehat{G}}\Vert \widehat{G}_{\varkappa,k}(2^{k}\cdot \pi') (1-\phi)(\pi'(\mathcal{R}))(1+\pi'(\mathcal{R}))^{\frac{N}{\nu}}(1-\phi)(\pi'(\mathcal{R}))(1+\pi'(\mathcal{R}))^{-\frac{N}{\nu}}\|_{\textnormal{HS}}^2d\pi'.
\end{align*} Note that the pseudo-differential operator $(1-\phi)(\mathcal{R})(1+\mathcal{R})^{-\frac{N}{\nu}}$ is smoothing and then its right-convolution kernel $k_{N}$   belongs to the Schwartz space $\mathscr{S}(G).$ Note also that
$$ \|  (1-\phi)(\pi'(\mathcal{R}))\|_{L^\infty(\widehat{G})}=\sup_{\pi'\in \widehat{G}} \Vert  (1-\phi)(\pi'(\mathcal{R}))\Vert_{\textnormal{op}}\leq \Vert 1-\phi\Vert_{L^{\infty}(\mathbb{R}^+)}\lesssim 1, $$
in view of the Functional calculus of the operator $\pi'(\mathcal{R}),$ $\pi'\in \widehat{G},$ and the properties of $\phi$ in \eqref{the:auxiliar:phi}. So, using the Plancherel theorem  we estimate
\begin{align*}
     & R_1\\
    &=\smallint\limits_{\widehat{G}}\Vert \widehat{G}_{\varkappa,k}(2^{k}\cdot \pi') (1-\phi)(\pi'(\mathcal{R}))(1+\pi'(\mathcal{R}))^{\frac{N}{\nu}}(1-\phi)(\pi'(\mathcal{R}))(1+\pi'(\mathcal{R}))^{-\frac{N}{\nu}}\|_{\textnormal{HS}}^2d\pi'\\
  &\leq \Vert \widehat{G}_{\varkappa,k}\Vert^2_{L^{\infty}(\widehat{G})}\|  (1-\phi)(\pi'(\mathcal{R}))\|^2_{L^\infty(\widehat{G})}\smallint\limits_{\widehat{G}}\Vert (1+\pi'(\mathcal{R}))^{\frac{N}{\nu}} (1-\phi)(\pi'(\mathcal{R}))(1+\pi'(\mathcal{R}))^{-\frac{N}{\nu}}\|_{\textnormal{HS}}^2d\pi'\\
  &\leq \Vert \widehat{G}_{\varkappa,k}\Vert^2_{L^{\infty}(\widehat{G})}\|  (1-\phi)(\pi'(\mathcal{R}))\|^2_{L^\infty(\widehat{G})}\smallint\limits_{\widehat{G}}\Vert (1+\pi'(\mathcal{R}))^{\frac{N}{\nu}}\widehat{k}_N(\pi')\|_{\textnormal{HS}}^2d\pi'\\
  &\lesssim 2^{-2k\varkappa}\smallint\limits_{\widehat{G}}\Vert(1+\pi'(\mathcal{R}))^{\frac{N}{\nu}}\widehat{k}_N(\pi')\|_{\textnormal{HS}}^2d\pi'= 2^{-2k\varkappa}\smallint\limits_{\widehat{G}}\Vert (1+\mathcal{R})^{\frac{N}{\nu}}k_N\|_{L^2(G)}^2.
\end{align*}So, we have proved that
$$ R_1\lesssim \Vert {G}_{\varkappa,k}\Vert_{L^{1}(G)}\lesssim2^{-k\varkappa}.  $$
The analysis above allows us to conclude that
\begin{align*}
    \Vert  \widehat{G}_{\varkappa,k} (1-\phi)((2^{-k}\cdot \pi)(\mathcal{R})) \widehat{K}\Vert_{L^2(\widehat{G})}\lesssim 2^{-k(\frac{Q(1-\alpha)}{2} +\varkappa)}= 2^{\frac{kQ(1-a)}{2}},
\end{align*}as well as the estimate \eqref{to:proof:jik}. It follows then that
$$ J_{1,k}\lesssim 2^{-k\varkappa}|y|^{Q(1-\theta)/\lambda}2^{kQ(1-\theta)/\lambda}.  $$
So, to finish our proof  in Case 2, note that $|y|^{-1}\lesssim 2^{k}$ which implies that
\begin{align}
    \sum_{k:|y|/2\leq 2^{-k}<|y|^{\frac{1-\theta}{1-\alpha}}}I_{k}\lesssim \sum_{k:|y|/2\leq 2^{-k}<|y|^{\frac{1-\theta}{1-\alpha}}} 2^{-k\varkappa}|y|^{Q(1-\theta)/\lambda}2^{kQ(1-\theta)/\lambda}\lesssim 1.  
\end{align} 
\item[Case 3:] $|y|^{\frac{1-\theta}{1-\alpha}}\leq 2^{-k}.$  Define $$\delta_k:=4\cdot2^{-k(1-\alpha)}.$$ Note that
$$  \delta_k/2\geq 2|y|^{1-\theta}. $$
Split $I_k$ as follows,
\begin{align*}
     I_k:=\smallint\limits_{|x|>2|y|^{1-a}}|K\ast G_{\varkappa,k}(y^{-1}x)-K\ast G_{\varkappa,k}(x)|dx=J_{1,k}+J_{2,k},
\end{align*} where
\begin{equation}
    J_{1,k}=\smallint\limits_{\{|x|>2|y|^{1-a} \}\cap \{x:|x|\leq \delta_k\} }|K\ast G_{\varkappa,k}(y^{-1}x)-K\ast G_{\varkappa,k}(x)|dx
\end{equation}and
\begin{equation}
    J_{2,k}=\smallint\limits_{\{|x|>2|y|^{1-a} \}\cap \{x:|x|> \delta_k\} }|K\ast G_{\varkappa,k}(y^{-1}x)-K\ast G_{\varkappa,k}(x)|dx.
\end{equation}Now, let us estimate $J_{2,k}.$ Indeed, the change of variable $z=xt^{-1},$ for $t\in \textnormal{supp}(G_{\varkappa,k})$ implies

\begin{align*}
    J_{2,k}&=\smallint\limits_{ \{|x|>2|y|^{1-a} \}\cap \{x:|x|> \delta_k\}  }|K\ast G_{\varkappa,k}(y^{-1}x)-K\ast G_{\varkappa,k}(x)|dx\\
    &=\smallint\limits_{ \{|x|>2|y|^{1-a} \}\cap \{x:|x|> \delta_k\}  }|\smallint\limits_{G} (K(y^{-1}xt^{-1}) G_{\varkappa,k}(t)-K(xt^{-1}) G_{\varkappa,k}(t))dt|dx\\
    &\leq  \smallint\limits_{G} |G_{\varkappa,k}(t)| \smallint\limits_{ \{|x|>2|y|^{1-a} \}\cap \{x:|x|> \delta_k\}  } |K(y^{-1}xt^{-1}) -K(xt^{-1})|dxdt\\
    &\leq  \smallint\limits_{G} |G_{\varkappa,k}(t)| \smallint\limits_{\{|zt|>2|y|^{1-a} \}\cap \{z:|zt|> \delta_k\}} |K(y^{-1}z)-K(z)|dzdt\\
    &\leq   \smallint\limits_{G} |G_{\varkappa,k}(t)| \smallint\limits_{ \{z:|zt|> \delta_k\}} |K(y^{-1}z)-K(z)|dz dt.
\end{align*}   Note that when  $|zt|>\delta_{k},$ we have $|t|+|z|\geq|zt|>\delta_{k}$ and with $t\in \textnormal{supp}(G_{\varkappa,k}),$ $|t|<2^{-k+1}$ from which ones deduce the inclusion of sets
$$ \{z:|zt|> \delta_k \}\subset\{z: |z|>\delta_k-2^{-k+1}\}, $$
and the estimate
$$  \smallint\limits_{ \{z:|zt|> \delta_k\}} |K(y^{-1}z)-K(z)|dz\leq  \smallint\limits_{ \{z: |z|>\delta_k-2^{-k+1}\} } |K(y^{-1}z)-K(z)|dz  $$
$$ \leq  \smallint\limits_{ \{z: |z|>\delta_k/2\} } |K(y^{-1}z)-K(z)|dz  $$
$$ \leq  \smallint\limits_{ \{z: |z|>2|y|^{1-\theta}\} } |K(y^{-1}z)-K(z)|dz , $$where we have used that $\delta_k/2\geq 2|y|^{1-\theta}.$
So, the previous analysis together with \eqref{G:B:K} gives
\begin{equation}
    J_{2,k}\lesssim 2^{-k\varkappa} \smallint\limits_{ \{z: |z|>2|y|^{1-\theta}\} } |K(y^{-1}z)-K(z)|dz\lesssim 2^{-k\varkappa}.
\end{equation}The same analysis done in Case 2, allows us to deduce the estimate
$$ J_{1,k}\leq C\delta_{k}^{\frac{Q}{2}}2^{-kQ(a-1)/2} \leq C2^{-k\varkappa}. $$
So, to finish Case 3, note that
\begin{align}
    \sum_{k:|y|^{\frac{1-\theta}{1-\alpha}}<2^{-k} }I_{k}\lesssim \sum_{k:|y|^{\frac{1-\theta}{1-\alpha}}<2^{-k}} 2^{-k\varkappa}\lesssim 1.  
\end{align} 
\end{itemize}
The proof of Theorem \ref{Sjolin:Th:graded:groups} is complete.
\end{proof}

\bibliographystyle{amsplain}

\end{document}